\documentclass{elsarticle} 
\bibliographystyle{elsarticle-num}

\journal{Computers and Mathematics with Applications}

\usepackage[cm]{fullpage}
\usepackage[]{lineno}
\usepackage{amsmath}
\usepackage{amssymb}
\usepackage{graphicx}
\usepackage[table]{xcolor}
\usepackage{tabularx}
\usepackage{multirow}
\usepackage{makecell}
\usepackage{multicol}
\usepackage{float}
\usepackage{todonotes}
\usepackage{chngcntr}
\usepackage{multicol}
\usepackage{amsthm}
\usepackage{cleveref}
\usepackage{url}
\usepackage{mathtools}
\usepackage{thm-restate}
\usepackage{soul}
\counterwithin{table}{subsection}
\usepackage{algpseudocode}  
\usepackage[utf8]{inputenc}
\newtheorem{theorem}{Theorem}
\newtheorem{lemma}{Lemma}
\newtheorem{assumption}{Assumption}
\newtheorem{remark}{Remark}

\usepackage[font=small,labelfont=bf]{caption}

\newcommand{\dual}{'}
\newcommand{\half}{{\frac 1 2}}
\newcommand{\reals}{{\mathbb{R}}}

\newcommand{\semi}[1]{\lvert#1\rvert}
\newcommand{\norm}[1]{\lVert#1\rVert}

\newcommand{\Rszi}[1]{R^{-1}_{#1}}
\newcommand{\mat}[1]{\mathsf{#1}}

\newcommand{\jump}[1]{\ensuremath{[\![#1]\!]} }
\newcommand{\avg}[1]{\ensuremath{\left\{\!\left\{#1\right\}\!\right\}} }


\newcommand{\nn}{\ensuremath{\mathbf{n}}}
\newcommand{\ff}{\ensuremath{\mathbf{f}}}
\newcommand{\ww}{\ensuremath{\mathbf{w}}}
\newcommand{\uf}{\ensuremath{\mathbf{u}_f}}

\newcommand{\vf}{\ensuremath{\mathbf{v}_f}}

\newcommand{\tG}{\ensuremath{T_t}}
\newcommand{\Tgn}{\ensuremath{\partial_{\nn}}}
\newcommand{\Tgne}{\ensuremath{\partial_{\nn,\epsilon}}}
\theoremstyle{definition}
\newtheorem{example}{Example}
\renewcommand{\AA}{\mathcal{A}}
\newcommand{\BB}{\mathcal{B}}
\numberwithin{example}{section}
\newcommand{\set}[1]{\{#1\}}
\newcommand{\dx}{\mathrm{d}x}
\newcommand{\ds}{\mathrm{d}s}

\usepackage{chngcntr}
\counterwithin{table}{section}

\newcommand{\ismuL}{{\frac 1 {\sqrt{\mu}}L^2(\Omega_f)}}
\newcommand{\smuH}{{\sqrt{\mu}H^1_{0, D}(\Omega_f) \cap \sqrt{D}L^2_{\tau}(\Gamma)}}

\newcommand{\sKH}{{\sqrt{K} H^1_{0, D}(\Omega_p)}}

\newcommand{\DSmulte}{{\sqrt{K}}L^2(\Gamma) \cap \frac 1 {\sqrt{\mu}} H^{-1/2}(\Gamma)}
\newcommand{\DSmultf}{\frac 1 {\sqrt{\mu}} H^{-1/2}(\Gamma)}
\newcommand{\DSmultpe}{{\sqrt{K}}L^2(\Gamma) }

\DeclareGraphicsRule{*}{eps}{*}{}

\begin{document}

\begin{frontmatter}
\title{Robust preconditioning for coupled Stokes-Darcy problems with the Darcy problem in primal form}

\author[uio,suurph]{Karl Erik Holter}
\ead{karleh@math.uio.no}
\author[simula]{Miroslav Kuchta}
\ead{miroslav@simula.no}
\author[uio,simula]{Kent-Andre Mardal}
\ead{kent@math.uio.no}

\address[uio]{Department of Mathematics, Division of Mechanics, University of Oslo, Oslo, Norway}
\fntext[suurph]{Karl Erik Holter is a doctoral fellow in the Simula-UCSD-University of Oslo Research and PhD training (SUURPh) program, an international collaboration in computational biology and medicine funded by the Norwegian Ministry of Education and Research.}
\address[simula]{Department of Numerical Analysis and Scientific Computing, Simula Research Laboratory}

\begin{abstract}
The coupled Darcy-Stokes problem is widely used for modeling fluid transport in physical systems consisting of a porous part and a free part.
In this work we consider preconditioners for monolitic solution algorithms of the coupled Darcy-Stokes problem, 
where the Darcy problem is in primal form. 
We employ the operator preconditioning framework and utilize a fractional solver at the interface between the problems
to obtain order optimal schemes that are robust with respect to the material parameters, i.e. the permeability, viscosity and Beavers-Joseph-Saffman condition. 
Our approach is similar to that of \cite{paper1}, but since the Darcy problem is in primal form, the mass conservation at the interface introduces
some challenges. These challenges will be specifically addressed in this paper.
Numerical experiments illustrating the performance are provided. The preconditioner is posed in non-standard Sobolev spaces 
which may be perceived as an obstacle for its use in applications. However,  we detail the 
implementational aspects and show that the preconditioner is quite feasible to realize in practice.  
\end{abstract}
\end{frontmatter}

\section{Introduction}

Let $\Omega=\Omega_f\cup\Omega_p$, where $\Omega_f$ is the domain of the viscous flow, 
$\Omega_p$ is the domain of the porous media and $\Gamma$ their common interface.
Further let the domain boundaries be decomposed as $\partial \Omega_f = \Gamma \cup \partial \Omega_{f,D} \cup \partial \Omega_{f, N}$
and $\partial \Omega_p = \Gamma \cup \partial \Omega_{p,D} \cup \partial \Omega_{p, N}$, 
where subscripts $D, N$ signify respectively that Dirichlet and Neumann boundary conditions are prescribed on the part of the boundary. 
The boundary of $\Gamma$, i.e., the intersection of $\Gamma$ and $\partial\Omega$ is denoted by $\partial\Gamma$. 
An illustration is given in Figure  \ref{fig:DSdomain}.\\
\begin{minipage}{0.50\textwidth}
The Stokes problem reads: 
\begin{align}
\label{eq:stokes1} 
\mu \Delta \uf - \nabla p_f &= \ff  \text{ in } \Omega_f,\\
\label{eq:stokes2}
\nabla \cdot \uf &= 0 \text{ in } \Omega_f,  
\end{align}
while the Darcy problem in primal form reads:  
\begin{align}
      \label{eq:darcy} 
      -K \Delta p_p &= g  \text{ in } \Omega_p.
\end{align}
\null
\par\xdef\tpd{\the\prevdepth}
\end{minipage}
\begin{minipage}{0.50\textwidth}
  \begin{figure}[H]
    \includegraphics[width=1.0\columnwidth]{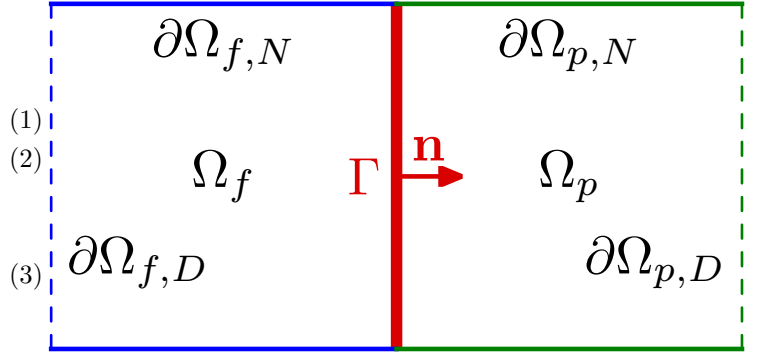}
    \vspace{-20pt}
    \caption{Schematic domain of Darcy-Stokes problem. Dirichlet conditions shown in dashed line, and interface in red.}
    \label{fig:DSdomain}    
    \vspace{5pt}    
\end{figure}
\end{minipage}
Here, $\uf, p_f$ are the unknown velocity and pressure for the Stokes problem \eqref{eq:stokes1}-\eqref{eq:stokes2} in $\Omega_f$,  $p_p$ is 
the unknown pressure of the Darcy problem \eqref{eq:darcy} in $\Omega_p$. The material parameters are the fluid viscosity $\mu$ and 
the permeability $K$. Here we shall consider the problem with the Dirichlet boundary conditions
\[
\uf = \uf^0\text{ on }\partial\Omega_{f, D},\quad p_p = p^0_p\text{ on }\partial\Omega_{p, D}
\]
and Neumann conditions
\[
\left(\mu\nabla\uf - p_f I\right)\cdot\mathbf{n}_f = \mathbf{h}\text{ on }\partial\Omega_{f, N},\quad \nabla p_p\cdot \mathbf{n}_p = h_p\text{ on }\partial\Omega_{p, N},
\]
where $\mathbf{n}_f$, $\mathbf{n}_p$ are the outer unit normals of the respective subdomains. 
In particular we assume that $\semi{\partial\Omega_{i, D}} > 0$ and $\semi{\partial\Omega_{i, N}} > 0$ for $i=p, f$.
Moreover, the coupled problem must be equipped with interface conditions expressing the continuity of stress as well as mass balance. We postpone their description until we describe the weak formulation of the problem.


The discretization of the coupled Darcy-Stokes problem with the Darcy problem in a mixed form is
challenging since the Darcy and Stokes problems, respectively, call for different schemes. For example, 
typical finite element methods for the Darcy problem, like the Raviart-Thomas or Brezzi-Douglas-Marini elements, are not stable for Stokes problem
as the discretization of the flux specifically targets the properties of $H(\mbox{div})$ rather than $H^1$ which is natural for Stokes discretizations.  
For this reason, a wide range of methods have been proposed over the last decade that address this particular challenge. For example, new
elements robust for both the Darcy and Stokes problem have been proposed in~\cite{arbogast2007computational,johnny2012family,karper2009unified,mardal2002robust,zhang2009low}. Alternatively, 
stabilization or modifications of standard methods may be used as in~\cite{burman2007unified,feng2010stabilized, xie2008uniformly}.
In this work we will consider the coupled problem with the Darcy equation in a primal form. 
Standard elements in both the Darcy and the Stokes domain will be used together with
a Lagrange multiplier to couple the unknowns appropriately at the interface.

The well-posedness of the Darcy-Stokes problem coupled together through the
use of a Lagrange multiplier is well-known  when the Darcy problem is in mixed form \cite{layton2002coupling,galvis2007non}, where both the continuous setting and various discretizations were proposed. Other solution and discretization algorithms for the coupled problem are presented in e.g. \cite{riviere2005locally, gatica2008conforming}, see \cite{discacciati2009navier,layton2002coupling} for an overview. For the mixed formulation we 
have, in our previous work \cite{paper1}, developed monolithic solvers that are 
robust with respect to all material parameters by utilizing fractional solvers on the interface.
Here, we continue with the same type of approach, but address the difficulty of the Darcy problem
in primal form. We remark that the problem to be studied further is symmetric and includes
an explicit variable, the Lagrange multiplier, on $\Gamma$. In this respect it differs
from the more common primal formulation, which leads to a non-symmetric system to be
solved for $\uf$, $p_f$ and $p_p$. Well-posedness of the latter problem was established in
\cite{discacciati2009navier} with efficient solvers proposed and analyzed e.g. in 
\cite{discacciati2003analysis, cai2009preconditioning, discacciati2007robin}.

An outline of the paper is as follows: Section \ref{sec:prelims} describes the notation,
introduces the symmetric primal Darcy-Stokes problem and illustrates the difficulties
in its preconditioning. The main challenge for the solver construction, i.e. the proper
posing of the coupling operator, is addressed in Section \ref{sec:normalderivative}.
Parameter robust preconditioners are then established in Section \ref{sec:darcy-stokes}.



\section{Preliminaries}\label{sec:prelims}

Let $\Omega$ be a bounded Lipschitz domain in $\reals^n$, $n$=2 or 3, and denote
its boundary by $\partial \Omega$. We denote by $L^2(\Omega)$ the Lebesgue space of
square integrable functions, with the norm $\|u\|^2_{L^2(\Omega)} = \int_{\Omega} |u|^2 \, dx$,
and by $H^1(\Omega)$ the Sobolev space of functions with first derivative in $L^2(\Omega)$
with norm $\|u\|^2_{H^1(\Omega)} = \|u\|^2_{L^2(\Omega)} + \|\nabla u\|^2_{L^2(\Omega)}$. Note that
the spaces are both Hilbert spaces, with the standard inner products. These spaces are
defined in the same way when $u$ is a vector field, in which case we will write $\mathbf{u}$
in boldface. We also define the subspace $H^1_0(\Omega)$ to be the completion in $\|\cdot\|_{H^1(\Omega)}$
of $C^{\infty}_0(\Omega)$, the space of smooth functions on $\Omega$ whose restriction to $\partial \Omega$ is zero.  

For a Lipschitz domain $\Omega$ with $\Gamma \subset \partial\Omega$, we can define a trace operator $T$ by
$Tu = u \rvert_{\Gamma}$ for smooth $u$. This can be extended to a bounded, surjective
and right-invertible operator $H^1(\Omega) \to H^{\half}(\Gamma)$ (cf. e.g. ~\cite{ding1996proof}),
where the space $H^{\half}(\Gamma)$ will be defined later. Given a subset $\partial \Omega_D$ of $\partial \Omega$,
we let $H^1_{0, \partial \Omega_D}(\Omega)$, or for readability just $H^1_{0, D}(\Omega)$, be the subspace
of $H^1(\Omega)$ for which the restriction to $\partial \Omega_D$ is zero, where the restriction is
defined in terms of the trace operator. Typically, $\partial \Omega_D$ will be the subset of
$\partial \Omega$ on which Dirichlet conditions are prescribed.
We also define the semi-norm $L^2_{\mathbf{\tau}}(\Gamma)$ on $H^1(\Omega)$ to be the $L^2(\Gamma)$ norm
of the tangential component of $\mathbf{u}$ at $\Gamma$. In 2D, this is just
$\|\mathbf{u}\rvert_{\Gamma} \cdot \boldsymbol{\tau}\|_{L^2(\Gamma)}$ where $\mathbf{\tau}$ is a
tangent unit vector, while in 3D it is more conveniently written as
$\|\mathbf{u}\rvert_{\Gamma} - (\mathbf{u}\rvert_{\Gamma} \cdot \mathbf{n}) \mathbf{n}\|_{L^2(\Gamma)}$.

For any inner product space $X$, we let $(\cdot, \cdot)_X$ denote its inner product.
When $X=L^2(\Omega)$, we will omit the subscript if there is no cause for confusion.
We write the space of continuous linear operators from $X$ to $Y$ as $\mathcal{L}(X, Y)$,
or just as $\mathcal{L}(X)$ if $Y=X$. For any two Sobolev spaces $X, Y$ both contained in
a common ambient space, we define the intersection and sum spaces $X \cap Y$ and $X + Y$ in terms of the norms

  \begin{align*}
  \|u\|^2_{X \cap Y} = \|u\|^2_{X} + \|u\|^2_{Y}\quad\text{ and }\quad\|u\|^2_{X + Y} = \inf_{\substack{x + y = u\\  x \in X, y \in Y}} \|x\|^2_{X} + \|y\|^2_{Y}.
  \end{align*}

  For any $c>0$, we define the scaled space $cX$ to be just $X$ as a set, but with the inner
  product $(u, v)_X = c(u, v)_{X}$. Its norm is trivially equivalent to
  $\| \cdot \|_X$, but because the equivalence constant depends on $c$, the
  distinction between the two norms becomes important when we need to establish
  the independence of bounds with respect to problem parameters. 
  
  We define the fractional space $H^s(\Gamma)$ following \cite{kuchta2016preconditioners}.
  Let $S \in \mathcal{L}(H^1(\Gamma))$ be the operator such that $(Su, v)_{H^1} = (S (I-\Delta) u,v)  = (u, v)_{L^2}$
  for all $v \in H^1(\Gamma)$.  We can then find a basis of $H^1(\Gamma)$ of orthonormal eigenfunctions
  $e_i$ of $S$ with eigenvalues $\lambda_i>0$. Writing $u = \sum_i c_i e_i$ in this basis,
  we define the norm $\|u\|^2_{H^s(\Gamma)} = \sum c_i^2 \lambda_i^{-s}$ for any $s \in [-1, 1]$.
  Further, let the space $H^s(\Gamma)$ be the completion of $C^\infty(\Gamma)$ with respect to $\|\cdot\|_{H^s(\Gamma)}$.
  We also define the space $H_{00}^s(\Gamma)$ in the same manner, except that we then apply Dirichlet boundary conditions by choosing $S$ in
  $\mathcal{L} \left ( H^1_0(\Gamma)\right )$.
  Furthermore,  $H_{00}^s(\Gamma)$ is the completion of  $C^\infty_0(\Gamma)$ rather than $C^\infty(\Gamma)$. 

For the sake of completeness we review here the construction of a matrix realization
of fractional operators given in \cite{kuchta2016preconditioners}. To this end let
$V_h\subset H^1(\Gamma)$, $n=\dim V_h$ be a finite dimensional finite element subspace with basis
functions $\phi_i$, $i=1, \dots, n$ and $\mat{A}$, $\mat{M}\in\reals^{n\times n}$ be
the symmetric positive definite (stiffness and mass) matrices such that
\[
\mat{A}_{ij}=(\nabla \phi_j, \nabla \phi_i)\quad\text{ and }\quad \mat{M}_{ij}=(\phi_j, \phi_i).
\]
In case $V_h \not \subset H^1(\Gamma)$ and piecewise  constant (P0) discretization is
used we let
\[
\mat{A}_{ij}= \displaystyle\sum_{\nu \in \mathcal{N}} \avg{h}_\nu^{-1}(\jump{\phi_j}_\nu, \jump{\phi_i}_\nu)_{\nu},
\]
where $\mathcal{N}$ is a set of all the facets of the finite element
mesh. Further the (facet) average and jump operators are defined as
$\avg{u}_\nu=\tfrac{1}{2}(u|_{K^{+}}+u|_{K^{-}})$, $\jump{u}_\nu=u|_{K^{+}}-u|_{K^{-}}$
with $K^{+}$ and $K^{-}$ the two cells sharing facet $\nu$. When $\nu$ is an exterior facet,
  we define $\jump{u}_{\nu} = \avg{u}_{\nu} =  u \rvert _{K}$, where $K$ is the unique cell with $\nu$ as facet.

It follows that the generalized eigenvalue problem $(\mat{A}+\mat{M})\mat{U}=\mat{M}\mat{U}\mat{\Lambda}$
has only positive eigenvalues and a complete set of eigenvectors that form the basis of $\reals^n$ so that
the powers of $\mat{S}=\mat{U}\mat{\Lambda}(\mat{M}\mat{U})^{T}$ are well defined. For $s\in\left[-1, 1\right]$
we then set $\mat{H}(s)=\mat{M}\mat{S}^s$. Letting $\mat{u}$ be the vector of degrees of
freedom of $u_h\in V_h$, i.e. $u_h=\sum^n_i(\mat{u})_i\phi_i$, we finally have
\[
\norm{u_h}_{H^s}=\sqrt{\sum_{i, j=1}^{n} \mat{u}_i \left(\mat{H}_{ij}(s)\mat{u}_j\right)}.
\]


When $\mathbf{u}$ is a vector function, we define the normal trace
$T_\nn\mathbf{u} = \mathbf{u}|_{\Gamma} \cdot \nn $ using the trace operator $T$ component-wise.
As such $T_{\nn}$ is a continuous map $H^1(\Omega) \to H^{\frac 12}(\Gamma)$. Moreover, we 
let $\tG$ be the tangential trace operator. We remark that in 2D and 3D the operator maps to scalar, respectively vector fields. The normal derivative, $\Tgn u = \nabla u \cdot \nn |_{\Gamma}$, is more challenging
to define properly in this context. Let us therefore briefly sketch an approach,
which at least in the authors' opinion at first glance seems like a natural starting point.
However, as we will show, the approach does not yield robust preconditioners in our context.  
First, notice that if we impose additional regularity on $u$ and require that
$\Delta u \in L^2$ then $\Tgn$ is well defined. In detail, let $w \in H^{1/2}(\partial\Omega)$ and 
$E:H^{1/2}(\partial\Omega)\rightarrow H^1(\Omega)$ be a (harmonic) extension operator.
Then  $\Tgn u$ clearly lies in $H^{-1/2}(\partial \Omega)$ because  
\[
\int_{\partial \Omega} \Tgn u \cdot \ww \, ds = \int_{\Omega} \Delta u \cdot E \ww \, \dx  + \int_{\Omega} \nabla \cdot (E\ww) \cdot \nabla u \, \dx \le \infty . 
\]
This extra regularity assumption is, however, hard to express in the operator preconditioning framework.
In particular, to the author's knowledge, there are no \emph{standard} finite elements that
would enable us to exploit the extra regularity. 
A possible approach could be NURBS~\cite{hughes2005isogeometric} or 
$C^1$ discretizations developed for fourth order problems. However, the latter often show poor performance for second order problems~\cite{nilssen2001robust}.

Alternatively, we may attempt to define $\Tgn$ as a composition of the first order derivative operator, $\nabla$,  with
the 1/2 order normal trace operator $T_{\nn}$. The composition $\Tgn$ could then be expected to be a 3/2 operator
$\Tgn: H^1(\Omega) \rightarrow H^{-1/2}(\partial \Omega)$. From an operator preconditioning point of
view, this would be feasible to realize, as we will see below.
However, as we will demonstrate, robustness will not be obtained if we realize $\Tgn$ as a 3/2 operator. In fact, 
robustness is only obtained if $\Tgn$ is a first order operator, $\Tgn: H^1(\Omega) \rightarrow L^2(\partial \Omega)$. 
We remark here that while the operator in a continuous setting is $\Tgn: H^1(\Omega) \rightarrow L^2(\partial \Omega)$, 
in the discrete setting we will include a scaling parameter, i.e. the mesh size, because we use the
finite element method. To see that this is reasonable, notice that for finite elements,
the mass matrix, as representation of the identity, is differently scaled in different dimensions. 
In Example \ref{ex:L2_trace} we detail the scaling in a simplified example.

In order to demonstrate why posing the $\Tgn$ operator properly is required, let us now formulate the coupled Darcy-Stokes problem, where the Darcy problem is in primal form.  
As a starting point, let the Lagrangian of the coupled problem be,   
\begin{align*}
L(\uf, p_f, p_p, \lambda) &= 
\int_{\Omega_f} \frac{1}{2} \left(\mu (\nabla \uf)^2 - \ff\cdot\uf\right) \, \dx + \int_{\Gamma} \frac{1}{2} D (\uf\cdot \tau)^2 \, \ds        
+ \int_{\Omega_p} \frac{1}{2} K \left((\nabla p_p)^2 - g \, p_p \right) \, \dx \\    
&+ \int_{\Omega_f} \nabla\cdot \uf \, p_f \, \dx    
+ \int_{\Gamma} (T_n \uf - K \Tgn p_p )  \lambda \, \ds    
\end{align*}
Note that the sign of $p_f$ has been changed from \eqref{eq:stokes1}. Here,  the Lagrange multiplier $\lambda$ in $\int_{\Gamma} (T_n \uf - K \Tgn p_p )  \lambda \, \ds$ is
used to ensure mass conservation, while the extra term $\int_{\Gamma} \frac{1}{2} D (\uf\cdot \tau)^2 \, \ds$, where $D=\alpha_{\text{BJS}}\sqrt{\tfrac{\mu}{K}}$, corresponds to the Beavers-Joseph-Saffman condition \cite{mikelic2000interface}.

The corresponding weak formulation is obtained by the first order optimality conditions of the 
Lagrangian, that is; 
$\frac{\partial L}{\partial \uf} = 0$,   $\frac{\partial L}{\partial p_f} = 0$,   
$\frac{\partial L}{\partial p_p} = 0$, and   $\frac{\partial L}{\partial \lambda} = 0$.    
A variational formulation hence reads:
Find $(\uf,  p_p, p_f, \lambda)$ such that  
\begin{equation}
  \label{eq:darcy_stokes_weak}
\begin{aligned}
a((\uf, p_p), (\vf, q_p)) + b((\vf, q_p), (p_f, \lambda)) &= f((\vf, q_p)) &\forall (\vf, q_p),\\ 
  b((\uf, p_p), (q_f, w)) &= g((q_f, w)) &\forall (q_f, w),
\end{aligned}
\end{equation}
where the bilinear forms $a$, $b$ are defined as
\begin{equation}
\label{eq:dsnaive}
\begin{aligned} 
  a((\uf, p_p), (\vf, q_p)) &=  \mu  (\nabla \uf, \nabla \vf)_{\Omega_f} + D (\uf \cdot \tau, \vf \cdot \tau)_\Gamma  + K (\nabla p_p, \nabla q_p)_{\Omega_p}, \\  
b((\uf, p_p), (q_f, w)) &=  (\nabla \cdot \uf, q_f)_{\Omega_f} + (T_{n}\uf, w)_{\Gamma} - K (\Tgn p_p, w)_{\Gamma}. 
\end{aligned}
\end{equation}
We shall refer to \eqref{eq:darcy_stokes_weak} as the (primal) Darcy-Stokes problem. Note
that the resulting formulation is symmetric.

While appropriate function spaces are readily available for $\uf,  p_p, p_f$ and their corresponding test functions, it is
less clear what the appropriate requirements are for $w$ and $\lambda$. This will be addressed below.    

\begin{example}{\textbf{Preconditioner for coupled Darcy-Stokes problem assuming $\Tgn: H^1\rightarrow H^{-1/2}$}.} \label{ex:prelim}
  Let us assume that $\Tgn$ is a 3/2 operator so that $K \Tgn p_p \in \frac{1}{\sqrt{K}} H^{-1/2}$ for $p_p\in \sKH$.
  Next, observe that since $\uf\in \smuH$ then $T_n \uf \in \sqrt{\mu} H^{1/2}$.
  Per assumption the coupling term
  $T_{n}\uf -K \Tgn p_p $ is $\in \sqrt{\mu} H^{1/2} + \frac{1}{\sqrt{K}} H^{-1/2}$ so that
  the dual variable $w \in  \frac{1}{\sqrt{\mu}} H^{-1/2} \cap \sqrt{K}H^{1/2}$.  
  In turn, we consider the following weak formulation:
	Find $\uf,  p_p, p_f, \lambda \in \smuH , \ismuL ,  \sKH, \frac{1}{\sqrt{\mu}} H^{-1/2}\cap \sqrt{K} H^{1/2}$ such that  
\begin{equation}
  \label{eq:example_naive}
\begin{aligned}
a((\uf, p_p), (\vf, q_p)) + b((\vf, q_p), (p_f, \lambda)) &= f((\vf, q_p) &\forall (v_f, q_p)\in \smuH\times\ismuL,\\ 
  b((\uf, p_p), (q_f, w)) &= g((q_f, w)&\forall (q_f, w)\in \sKH\times \frac{1}{\sqrt{\mu}} H^{-1/2}\cap \sqrt{K} H^{1/2}.
\end{aligned}
\end{equation}
The coefficient matrix associated with \eqref{eq:darcy_stokes_weak} reads
  \begin{align}
\label{coeff:darcy:stokes}
\AA =    \left( \begin{array}{cc|cc}
     -\mu \Delta + D \tG' \tG &   &(\nabla \cdot)' & T_{\nn}' \\ 
      &- K \Delta  & & - K \Tgn' \\ \hline
     \nabla \cdot&   & &  \\ 
     T_{\nn} & - K \Tgn  &  & 
     \end{array} \right).
  \end{align}

  Assuming that the proposed spaces indeed lead to well-posed operator $\mathcal{A}$,
  the operator preconditioning framework \cite{mardal2011preconditioning} yields as
  a preconditioner the Riesz mapping
\begin{equation}
\BB =    \left( \begin{array}{cccc} 
     -\mu \Delta + D \tG^{\prime}\tG &     \\ 
 &-K \Delta   & &   \\ \hline
                                                      & &  \frac{1}{\mu}I  &  \\
       & &    &  \frac{1}{\mu}\left ( I + \Delta \right )^{-1/2}+K \left ( I + \Delta \right )^{1/2}    \\ 
     \end{array} \right)^{-1}.\label{eq:B_darcy_stokes_naive}
 \end{equation}

In order to test the preconditioner, we solve problem
\eqref{eq:example_naive} on $\Omega=[0, 2] \times [0, 1]$,
 where $\Omega_f=[0, 1] \times [0, 1]$ and $\Omega_p=[1, 2] \times [0, 1]$
 and the Dirichlet boundary domains are $\partial\Omega_{f, D}=\set{(x, y)\in\partial\Omega_f, x = 0 }$
 and  $\partial\Omega_{p, D}=\set{(x, y)\in\partial\Omega_p, x = 2}$ , cf. \Cref{fig:DSdomain}.
 The mesh is a uniform triangular mesh, consisting of $4N^2$ equally sized isosceles triangles.
 To discretize \eqref{eq:darcy_stokes_weak}, we use lowest order (P2-P1) Taylor-Hood elements for
 the Stokes velocity and pressure, while piecewise quadratic elements (P2) were used for the
 Darcy pressure and piecewise constant elements (P0) for the Lagrange multiplier. Discretization is carried out in the FEniCS library \cite{fenics},  with coupling maps between the interface and domains and the fractional Laplacians being implemented by the extension FEniCS\textsubscript{ii} \cite{fenics_ii}. 

 Approximation of the preconditioner  \eqref{eq:B_darcy_stokes_naive} is constructed by using
 single sweep of $V$-cycle of algebraic multigrid BoomerAMG from the Hypre library \cite{hypre} for
 all the blocks except for the interface block, which is inverted exactly. Starting
 from a random initial vector, we count the number of iterations required
 to solve the preconditioned linear system using the MINRES solver from the PETSc library \cite{petsc} with convergence
 criterion based on relative tolerance of $10^{-8}$ and absolute tolerance of $10^{-10}$. Additionally,
 the condition numbers of $\BB^{-1} \AA$ are computed using an iterative solver from the SLEPc library \cite{Hernandez:2005:SSF}.
 In the condition number computations the operator $\mathcal{B}$ is computed exactly, that is, all
 the blocks are inverted by LU. We remark that the solver setup should be used also in the 
 subsequent examples.

 The results of the experiment are plotted in Figure \ref{tab:naiveitercondtable}. By the failure
 of the iteration counts to stabilize, we see that using
 $\frac{1}{\mu}\left ( I + \Delta \right )^{-1/2}+K \left ( I + \Delta \right )^{1/2}$ as multiplier
 space does not lead to a robust preconditioner over the whole parameter range. Note, however, that in the regime where $\mu$ is significantly smaller than $K$ (i.e. the lower left region of the plots in Figure \ref{tab:naiveitercondtable}), iteration counts and condition numbers appear to be stable as the mesh is refined. In this regime, the norm of the multiplier space is dominated by the part from $\frac{1}{\sqrt{\mu}} H^{-1/2}$, 
 which is determined by posing of the trace operator. This suggests that the choice of $\sqrt{K} H^{1/2}$, i.e. wrong posing of the $\Tgn$ operator, is responsible for the lack of boundedness.



\begin{figure}
  \begin{center}
   { \small
       \centering
       \includegraphics[width=0.49\textwidth]{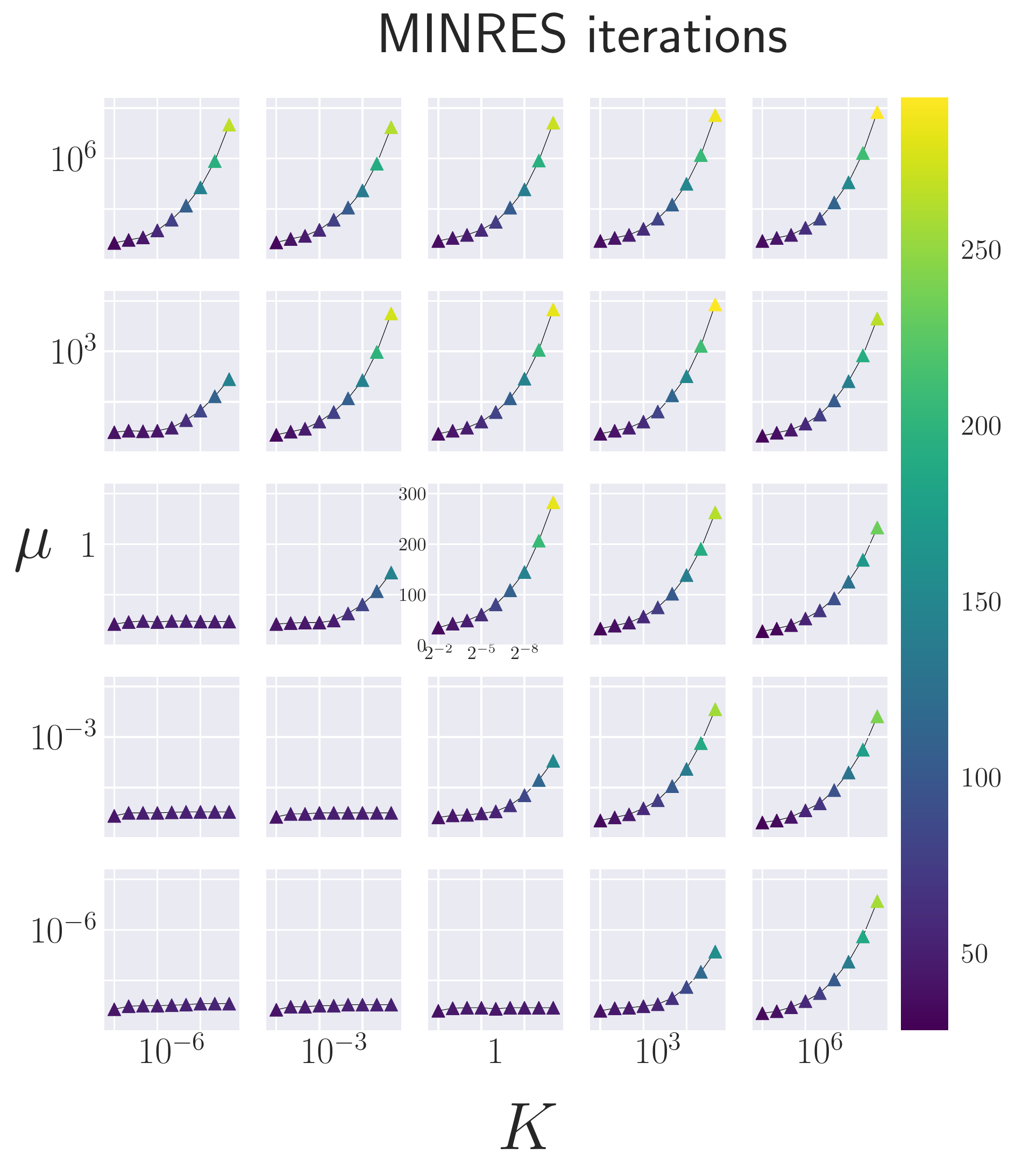}
       \includegraphics[width=0.49\textwidth]{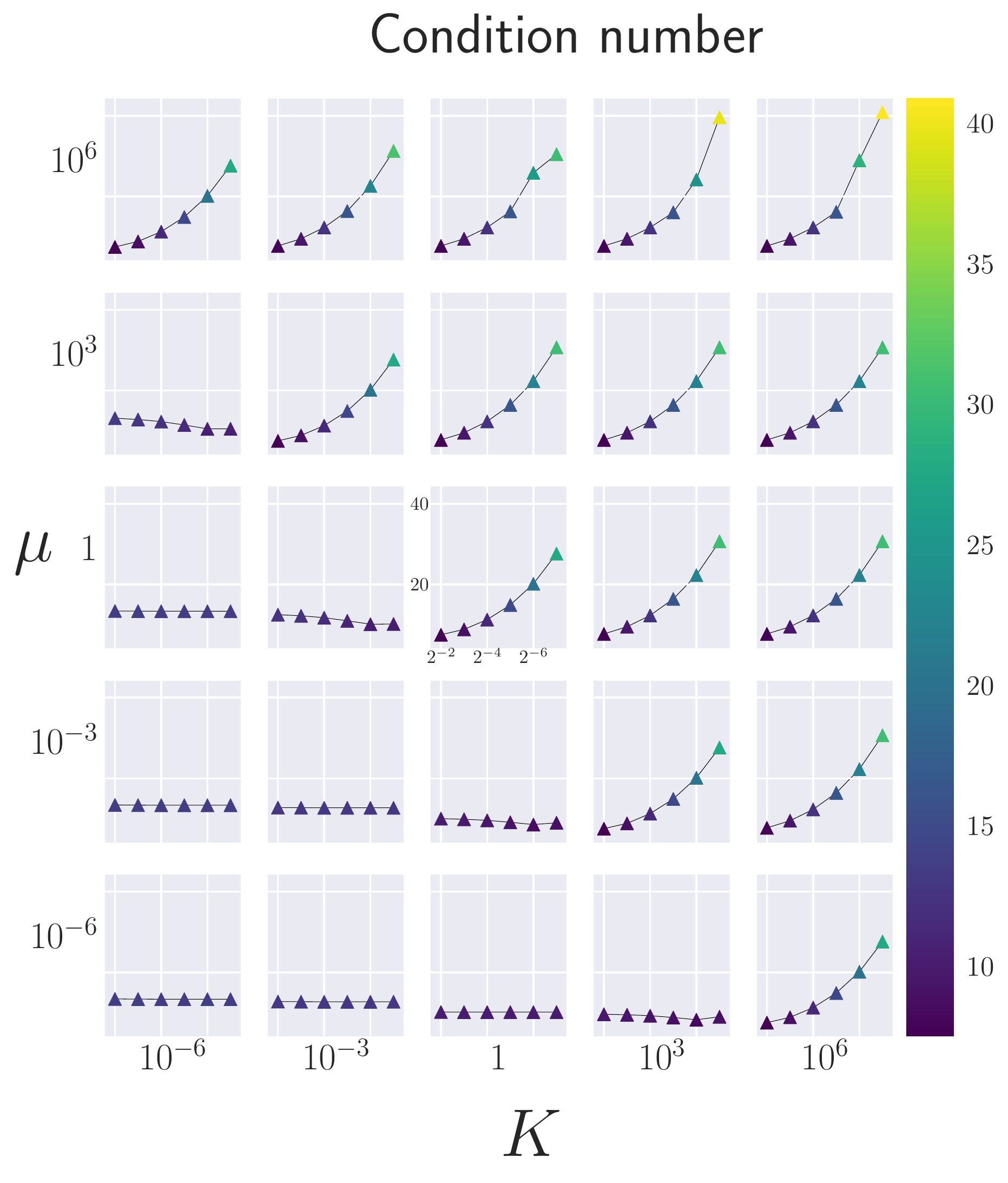}
  
       \caption{
         Mesh refinement vs. iteration counts (left) and condition numbers (right) for Example \ref{ex:prelim}.
         All subplots share $x$- and $y$-axes. 
         For fixed $\mu$, $K$ the 
         x-axis range in the iterations subplot extends from $h=2^{-2}$ to $h=2^{-10}$. In the condition number plots the range is 
         from $h=2^{-2}$ to $h=2^{-8}$.  
         In all cases, $\alpha_{BJS}=1$. 
         \label{tab:naiveitercondtable}}
   }
     \end{center}
   \end{figure}
\end{example}
%

\section{Approximating the trace normal gradient operator}
\label{sec:normalderivative}
A crucial step in the analysis of the Darcy-Stokes problem will be the mapping properties
of the operator $\Tgn$. As a computationally practical choice of space for the Darcy pressure 
is $\sqrt{K}H^1$, we immediately run into the problem discussed in the preliminaries because $\Tgn$
cannot be defined on all of $H^1$. This necessitates either an assumption of extra regularity or
an alternative approach.

Motivated by the observation in~\cite{holter2017sub}, that in a discrete finite element
setting the trace operator is stable as a map $L^2(\Omega) \rightarrow L^2(\partial \Omega)$, we
propose an alternative approach to construct the preconditioners. We start off by outlining the construction of an 
operator $\Tgne: H^1(\Omega_p) \to L^2(\Gamma)$ which will be an approximation to $\Tgn$.  Suppose $\Gamma$ is a
sufficiently regular subset of $\partial \Omega_p$, 
and that $\Gamma$ is of co-dimension  1 in $\Omega_p$.
The $\epsilon$-thick envelope $\Gamma_\epsilon = \{y \in\Omega_p, \operatorname{dist}(y, \Gamma) < \epsilon \}$
is a higher-dimensional approximation of $\Gamma$. For any  $v \in H^1(\Omega_p)$,  
\begin{equation}
  \label{eq:epsilontozero}
  \frac 1 {\epsilon} \int_{\Gamma_{\epsilon}}v \, \phi  \, \dx \to \int_{\Gamma} T v \, T \phi   \, ds \text{ as } \epsilon \to 0, 
\end{equation}
where $\phi$ is a test function in $H^1(\Omega_p)$.    

Note that although the integral over $\Gamma$ is not well-defined for a general
$v \in L^2(\Omega_p)$, the integral over $\Gamma_{\epsilon}$ is. Provided $\Gamma$ is
sufficiently regular and $\epsilon$ sufficiently small, we assume that there exists a vector field
$\nn_{\Gamma_{\epsilon}}$ on $\Gamma_{\epsilon}$ which approximates the normal vector $\nn_{\Gamma}$
of $\Gamma$ at $\Gamma$. Using $\nn_{\Gamma_{\epsilon}}$, we further assume that we can define a bounded extension
$E_{\epsilon}: L^2(\Gamma) \to L^2(\Gamma_{\epsilon})$ along  $\nn_{\Gamma_{\epsilon}}$
for which $\int_{\Gamma} w \, ds \approx \frac 1 {\epsilon} \int_{\Gamma_{\epsilon}} E_{\epsilon} w \, \dx$ for any $w \in L^2(\Gamma)$.
Provided $\nn_{\Gamma_{\epsilon}}$ and $E_{\epsilon}$ can be defined, then for any $u \in H^1(\Omega_p)$ we can define $\Tgne u$ by
\[
\int_{\Gamma} \Tgne u \cdot w \, ds = \frac 1 {\epsilon} \int_{\Gamma_{\epsilon}} \nabla u \cdot \nn_{\Gamma_{\epsilon}} E_{\epsilon}w \, dx
\]
for any $ w \in L^2(\Gamma)$, thus defining the required map $\Tgne: H^1(\Omega_p) \to L^2(\Gamma_\epsilon)$ approximating \Tgn. We assume that the resulting operator $\Tgne$ 
is both
surjective and bounded, with $\|\Tgne u\|_{L^2(\Gamma)} \leq C\|u\|_{H^1(\Omega_p)}$, and that $\Tgne$ has a bounded right inverse.

We emphasize that $\Tgne$ is just an analytical tool constructed for the analysis in
the continuous setting and that $\epsilon$ is not related to the mesh size $h$. In
fact, we can choose $\epsilon$ far smaller than the mesh size and for any practical
purposes in computations we assume that $\Tgne$ will be practically identical to $\Tgn$. We summarize the assumption as follows:

\begin{assumption}
\label{normalderivativeeps}
Given a sufficiently regular $\Gamma$,  $\Tgne : H^1(\Omega_p) \rightarrow L^2(\Gamma)$ is a bounded surjection
which approximates $\Tgn$ on the subspace of $H^1$ on which $\Tgn$ can be defined. Further, $\Tgne$ has a bounded right inverse.
\end{assumption}

Although characterizing the conditions under which \Cref{normalderivativeeps} holds is beyond the scope of this paper, we motivate the existence of the required constructions $E_{\epsilon}, \mathbf{n}_{\epsilon}$  in a few simple examples below. 
\begin{example}\label{ex:epsilonconstructions}
Let $\Gamma$ be the $y-$axis, and $\Omega_p$ be the positive half-plane. The construction of $E_{\epsilon}, \nn_{\Gamma_{\epsilon}}$ is then given by $\nn_{\Gamma_{\epsilon}} = \nn_{\Gamma} = (-1, 0)$ and for $w(y) \in C^1(\Gamma)$ we let $(E_{\epsilon} w)(x, y) = w(y)$. This continuously extends to all of $L^2(\Gamma)$.
Clearly $\Tgne u  \rightarrow \Tgn u$ as $\epsilon\rightarrow 0$ for $u \in C^1$. Given any $w(y) \in C^0(\Gamma)$, define $u$ by $u(x, y) = -xw(y)$. Then the map $w \to u$ continuously extends to a right inverse of $\Tgne$, as by linearity $\Tgne u = \Tgn u = w$.

Next, suppose $\Omega_p$ is the unit disk, and $\Gamma$ its boundary. By parametrizing $\Gamma$ with e.g. polar coordinates, this case can be effectively translated to the above. $\nn_{\Gamma_{\epsilon}}$ is now the unit radial vector $\mathbf{i}_r$, and for any $w(\theta) \in C^1(\Gamma)$,  $(E_{\epsilon} w)(r, \theta) = w(\theta)$. Again, this definition of $E_{\epsilon}$ extends to all of $L^2(\Gamma)$.
 Because $\frac 1 {\epsilon} \int_{\Gamma_{\epsilon}} \nabla u \cdot \nn_{\Gamma_{\epsilon}} E_{\epsilon}w \, dx = \int \limits_0^{2 \pi} w(\theta) \cdot  \int  \limits_{1-\epsilon}^1 \frac 1 {\epsilon} \frac {\partial u} {\partial r}  \, r dr \, d\theta $ and $\frac 1 {\epsilon} \int\limits^1_{1-\epsilon} f(r) \, dr \to f(1)$ as $\epsilon \to 0$,  we again have $\Tgne u  \rightarrow \Tgn u$ as $\epsilon\rightarrow 0$ for $u \in C^1$. Analogously to the previous case, a right inverse can be defined by sending any $w(\theta) \in C^0(\Gamma)$ to $u(r, \theta) = r w(\theta)$.

\end{example}

Before considering the Darcy-Stokes problem, we justify Assumption \ref{normalderivativeeps}. 
First we consider a simplified example in order to illustrate how the scaling of mass matrices in 
different dimensions affect preconditioners constructed via the application of trace operators. 
Then, in Example
\ref{ex:bab_uni} we construct preconditioners for a 
Poisson problem with a $\Tgn$-constraint which is to be enforced by a
Lagrange multiplier, cf. the Babu{\v s}ka problem \cite{babuvska1973finite} involving
the trace operator.

\begin{example}{\textbf{Trace constrained $L^2$ projection.}}\label{ex:L2_trace}
Let $\Omega$ be a bounded domain with $\Gamma\subseteq\partial\Omega$ and 
$V=H^1(\Omega)$. We then consider the problem 
\begin{equation}\label{eq:L2_min}
  \min_{u\in V}\int_{\Omega} {u}^2\,\dx - 2\int_{\Omega} f u \,\dx
  \quad\text{ subject to }\quad\int_{\Gamma}(T u - g) \, p \,\ds=0.
\end{equation}
Letting $p$ denote the Lagrange multiplier associated with the boundary
constraint, the extrema $u\in V$, $p\in Q=L^2(\Gamma)$ of
the Lagrangian of \eqref{eq:L2_min} satisfy the variational problem: 
Find $u\in V$ and $p\in Q$ such that 
\begin{equation}\label{eq:L2_N}
  \begin{aligned}
    \int_{\Omega} u v \,\dx + \int_{\Gamma}p T v \,\ds &= \int_{\Omega} f v \,\dx &\forall v\in V,\\
    \int_{\Gamma}q T u\,\ds &= \int_{\Gamma}g q\,\ds &\forall q\in Q.
  \end{aligned}
\end{equation}
The operator of the preconditioned continuous problem then reads
\begin{equation}\label{eq:precond_L2}
\mathcal{B}\mathcal{A} =
\begin{pmatrix}
  I & \\
  & S
\end{pmatrix}^{-1}
\begin{pmatrix}
  I & T^{\prime}\\
  T & 
\end{pmatrix},
\end{equation}
where $S$ is to be constructed such that the condition number is bounded in
the discretization parameter $h$. Here we shall consider three constructions. 
We remark that when using the finite element method, the identity or the mass matrix 
has eigenvalues such that both the smallest and the largest eigenvalues scale as $h^d$ on uniform mesh.
First we consider $S=I$, with eigenvalues $\approx h$. 
Then, following \cite{holter2017sub}, we let $S=h^{-1}I$, i.e., a matrix with eigenvalues $\approx 1$.
Finally, the choice of $S=(-\Delta+I)^{-1/2}$ is included to show that the
relevant trace space in \eqref{eq:precond_L2} is not (by viewing the trace as an order 1/2 operator)
$H^{1/2}$ so that dual variable would reside in $H^{-1/2}$.

We remark that the first two operators are in practical computations assembled as
weighted mass matrices where the weights for the respective operators
are 1 and inverse cell volume. Recalling Preliminaries \S\ref{sec:prelims}
the matrix representation of the fractional operator is $\mat{H}(1/2)$.

To compare the three preconditioners, we let $\Omega$ be a unit square,
$\Gamma=\left\{(x, y)\in\partial\Omega,x=0\right\}$. Further, the domain shall be 
discretized uniformly into $4N^2$ isosceles triangles with size $h=1/N$, see Figure \ref{fig:bab_uni}.
Considering finite element discretization by P2-P1 elements Table \ref{tab:L2_uni}
lists spectral condition numbers of \eqref{eq:precond_L2}. It can be seen
that only the $S=h^{-1}I$ preconditioner leads to results independent of $h$.


The growth of the condition number in Table \ref{tab:L2_uni} due to the 
preconditioner with $-1/2$ power indeed confirms that $H^{1/2}$ is not
appropriate in our setting. An attempt to establish the trace space could
be based on viewing the trace as an 1/2 operator. Starting from $L^2$ a formal
calculation then leads to the space $H^{-1/2}$ and $H^{1/2}$ as the multipler
space. While we do not include here the results for $S=(-\Delta+I)^{1/2}$ we remark
that the condition number behaves practically identically to $S=I$.

\begin{minipage}{\textwidth}
\begin{minipage}{0.45\textwidth}
  \begin{center}
  \footnotesize{
      \begin{tabular}{c|ccc}
        \hline
$h$  & $I$ & $(-\Delta+I)^{-1/2}$ & $h^{-1} I$\\
    \hline
$2^{-2}$ & 8.72  &24.08   & 4.63\\
$2^{-3}$ & 12.11 &47.84   & 4.63\\
$2^{-4}$ & 16.91 &95.15   & 4.63\\
$2^{-5}$ & 23.70 &189.9   & 4.63\\
$2^{-6}$ & 33.31 &379.2   & 4.63\\
$2^{-7}$ & 46.90 &758.0   & 4.63\\
$2^{-8}$ & 66.12 &1515    & 4.63\\
  \hline
      \end{tabular}
      \captionof{table}{Condition numbers of \eqref{eq:precond_L2} with different preconditioners and
        discretization by P2-P1 elements on (us) mesh from Figure \ref{fig:bab_uni}. Boundedness is obtained with the Schur complement preconditioner $h^{-1}I$.
        }
      \label{tab:L2_uni}
  }
  \end{center}
\end{minipage}
\begin{minipage}{0.45\textwidth}
  \begin{center}
    \footnotesize{
            \begin{tabular}{c|ccc|ccc} \hline
              \multirow{2}{*}{$l$} & \multicolumn{3}{c|}{P2-P1} & \multicolumn{3}{c}{P2-P0}\\
              \cline{2-7}
              & (us) & (uu)  & (nu) & (us) & (uu)  & (nu)\\
              \hline
1& 4.63 & 4.63 & 4.10 & 4.63 & 4.63 & 3.98\\ 
2& 4.63 & 4.06 & 4.32 & 4.63 & 4.07 & 4.33\\ 
3& 4.63 & 4.20 & 4.28 & 4.63 & 4.20 & 4.31\\ 
4& 4.63 & 4.29 & 4.31 & 4.63 & 4.32 & 4.34\\ 
5& 4.63 & 4.45 & 4.50 & 4.63 & 4.43 & 4.45\\ 
6& 4.63 & 4.25 & 4.28 & 4.63 & 4.32 & 4.37\\ 
7& 4.63 & 4.25 & 4.36 & 4.63 & 4.28 & 4.39\\
\hline
            \end{tabular}
            }
  \end{center}
\captionof{table}{Condition numbers of \eqref{eq:precond_L2} with 
preconditioner using $S=h^{-1}I$. Boundedness with different types of 
triangulations, cf. Figure \ref{fig:bab_uni}, and discretizations can be observed.
}
\label{tab:L2}
\end{minipage}
\end{minipage}

In order to verify that the properties of $h^{-1}I$ preconditioner are not
due to the highly structured mesh, we consider two additional discretizations
of $\Omega$ shown in Figure \ref{fig:bab_uni}. In particular, the triangulations are
obtained as refinements of the unstructured meshes where in one case
the mesh size is uniform while in the other one the mesh is finer close to the
multiplier domain $\Gamma$. Moreover, using these triangulations, problem \eqref{eq:precond_L2}
shall be discretizated by P2-P1 elements as well P2-P0 elements to provide
more evidence for the preconditioner construction. Indeed, Table \ref{tab:L2}
shows that the condition numbers of \eqref{eq:precond_L2} are bounded
irrespective of the underlying mesh and the finite element discretization considered.
\end{example}

\begin{example}{\textbf{Babu{\v s}ka problem with Neumann boundary conditions.}}\label{ex:bab_uni}
Let $\Omega$ be a bounded domain with the boundary partitioned into non-overlapping subdomains
  $\partial\Omega=\partial\Omega_D\cup\partial\Omega_N\cup \Gamma$ such that $\semi{\partial\Omega_D}>0$
  and $\semi{\Gamma}>0$. We will consider both the case that  
$\partial\Omega_D\cap\Gamma=\emptyset$ and later the case that $\partial\Omega_N\cap\Gamma=\emptyset$.  
Let $V=H^1_{0, \partial\Omega_D}(\Omega)$ and consider the problem
\begin{equation}\label{eq:babuska_min}
  \min_{u\in V}\int_{\Omega} \semi{\nabla u}^2\,\dx - 2\int_{\Omega} f u \,\dx
  \quad\text{ subject to }\quad\int_{\Gamma}(\Tgn u - g) \, p \,\ds=0.
\end{equation}
With $p$ the Lagrange multiplier associated with $\Tgn$-constraint
\eqref{eq:babuska_min} leads to a variational problem: Find $u\in V$ and $p\in Q=L^2(\Gamma)$ such that 
\begin{equation}\label{eq:babuska_N}
  \begin{aligned}
    \int_{\Omega} \nabla u\cdot \nabla v \,\dx + \int_{\Gamma}p \Tgn v\,\ds &= \int_{\Omega} f v \,\dx &\forall v\in V,\\
    \int_{\Gamma}q \Tgn u\,\ds &= \int_{\Gamma}g q\,\ds &\forall q\in Q.
  \end{aligned}
\end{equation}
The preconditioned continuous problem then reads
\begin{equation}\label{eq:precond_bab}
\mathcal{B}\mathcal{A} =
\begin{pmatrix}
  -\Delta & \\
  & S
\end{pmatrix}^{-1}
\begin{pmatrix}
  -\Delta & {\Tgn}^{\prime}\\
  \Tgn & 
\end{pmatrix}.
\end{equation}
Following the preliminaries where $\Tgn$ was regarded as a 3/2 operator we let
$S=(-\Delta+I)^{1/2}$. Alternatively, $S=h^{-1}I$ is set following the Assumption
\ref{normalderivativeeps}. Finally $S=I$ is considered. Matrix realization
of the $S$ operators shall be identical to Example \ref{ex:L2_trace}. We shall also
use the tessellations described in Example \ref{ex:L2_trace} as well as identical
eigenvalue solvers.


To compare the three preconditioners we let $\Omega$ be a unit square and
$\Gamma=\left\{(x, y)\in\partial\Omega, x=0\right\}$ and we consider first the
(Neumann) case where $\partial\Omega_N=\left\{(x, y)\in\partial\Omega,y=0\text{ or }y=1\right\}$,
i.e. where the multiplier domain intersects the part of boundary with Neumann
boundary conditions. Using the uniform meshes (marked as (us) Figure in \ref{fig:bab_uni})
and P2-P1 elements, Table \ref{tab:bab_uni} shows the spectral condition numbers of
\eqref{eq:precond_bab}. As in Example \ref{ex:L2_trace} only $S=h^{-1}I$ preconditioner
(based on \Cref{normalderivativeeps}) leads to results independent of $h$.\\
\begin{minipage}{\textwidth}
\begin{minipage}{0.42\textwidth}
  \begin{center}
  \footnotesize{
      \begin{tabular}{c|ccc}
    \hline
$h$  & $(-\Delta+I)^{1/2}$ & $I$ & $h^{-1} I$\\
    \hline
$2^{-2}$ &    11.99&     6.70&       4.88\\
$2^{-3}$ &    14.55&     9.27&       4.88\\
$2^{-4}$ &    18.47&     12.89&      4.88\\
$2^{-5}$ &    24.44&     18.01&      4.88\\
$2^{-6}$ &    33.25&     25.26&      4.88\\
$2^{-7}$ &    45.96&     35.52&      4.88\\
$2^{-8}$ &    64.10&     50.02&      4.88\\
  \hline
      \end{tabular}
  }
  \end{center}
  \captionof{table}{Condition numbers of \eqref{eq:precond_bab} discretized
    by P2-P1 elements on uniform refinements of (us) mesh in Figure \ref{fig:bab_uni}.
    Boundednes in discretization is obtained only with $S=h^{-1}I$.
    }
  \label{tab:bab_uni}
\end{minipage}
\begin{minipage}{0.58\textwidth}
  \begin{center}
\includegraphics[width=0.32\textwidth]{img/0_80.mps}
\includegraphics[width=0.32\textwidth]{img/1_80.mps}
\includegraphics[width=0.32\textwidth]{img/2_80.mps}
\vspace{-10pt}
\captionof{figure}{Parent meshes for uniform refinement. From left to right:
  uniform structured(us), uniform unstructured(uu), non-uniform unstructured(nu).
  Non-uniform mesh has finer (by factor 3) mesh size close to $\Gamma$.
}
\label{fig:bab_uni}
  \end{center}
\end{minipage}
\end{minipage}

Table \ref{tab:bab_d_n} shows that the performance of $h^{-1}I$ in \eqref{eq:precond_bab}
remains robust if different tessellations and finite element discretizations are
used.\\
\begin{minipage}{\textwidth}
    \footnotesize{
      \begin{minipage}{0.49\textwidth}
          \begin{center}
            \begin{tabular}{c|ccc|ccc} \hline
              \multirow{2}{*}{$l$} & \multicolumn{3}{c|}{P2-P1} & \multicolumn{3}{c}{P2-P0}\\
              \cline{2-7}
              & (us) & (uu)  & (nu) & (us) & (uu)  & (nu)\\
              \hline
1& 4.88&4.77&6.64&3.49&3.49&3.06\\
2& 4.88&5.98&6.56&3.49&3.04&3.37\\
3& 4.88&5.78&5.67&3.49&3.24&3.36\\
4& 4.88&6.31&6.67&3.49&3.40&3.40\\
5& 4.88&5.25&5.68&3.49&3.44&3.48\\
6& 4.88&5.71&5.89&3.49&3.41&3.44\\
7& 4.88&6.14&6.61&3.49&3.35&3.47\\
\hline
\end{tabular}
\end{center}
      \end{minipage}
    }
          \footnotesize{
            \begin{minipage}{0.49\textwidth}
              \begin{center}
            \begin{tabular}{c|ccc|ccc} \hline
              \multirow{2}{*}{$l$} & \multicolumn{3}{c|}{P2-P1} & \multicolumn{3}{c}{P2-P0}\\
              \cline{2-7}
              & (us) & (uu)  & (nu) & (us) & (uu)  & (nu)\\
              \hline                  
1&5.34&5.25&6.67&3.48&3.45&3.04\\
2&5.34&6.25&6.67&3.49&2.99&3.37\\
3&5.34&5.94&5.84&3.49&3.24&3.36\\
4&5.34&6.52&6.93&3.49&3.40&3.40\\
5&5.34&5.56&6.07&3.49&3.44&3.48\\
6&5.34&5.91&6.17&3.49&3.41&3.44\\
7&5.34&6.40&6.85&3.49&3.35&3.47\\
\hline
            \end{tabular}
              \end{center}
              \end{minipage}
          }
          \captionof{table}{Condition numbers of \eqref{eq:precond_bab} using $S=h^{-1}I$ preconditioner discretized on uniform
            refinements of parent meshes in Figure \ref{fig:bab_uni} using two element types. Refinement
            level is indicated by $l$. (Left) $\Gamma$ intersects $\partial\Omega_N$.
            (Right) $\Gamma$ intersects $\partial\Omega_D$.
          }
          \label{tab:bab_d_n}
\end{minipage}
\vspace{0.2cm}

In the context of multiscale problems, compatibility of boundary conditions of the multiplier
space and the boundary conditions prescribed on the domain intesecting $\Gamma$
is known to present an issue, cf. e.g. \cite{galvis2007non}. Here, we address
this problem by considering \eqref{eq:precond_bab} with $\semi{\partial_N\Omega}=0$,
i.e. we let $\Gamma$ intersect only the Dirichlet boundary. We remark that until this
point only intersection with Neumann boundary was considered. 

In Table \ref{tab:bab_d_n} the Dirichlet problem is considered with an
\emph{unmodified} $h^{-1}I$ preconditioner. In particular, with P2-P1
discretization we impose \emph{no} boundary conditions on the multiplier space. 
Using this construction the condition numbers can be seen to remain bounded on
all the meshes and with both finite element discretizations.

We remark that the $h^{-1}I$ preconditioner is equally unaffected by the Dirichlet
boundary conditions on $\partial\Omega_D=\partial\Omega\setminus\Gamma$ in the
trace-constrained $L^2$ projection problem \eqref{eq:L2_min} with $V=H^1_{0,\partial\Omega_D}(\Omega)$,
cf. Example \ref{ex:L2_trace}, in contrast to the $H^1$ problems considered in \cite{paper1}, where
the appropriate preconditioner was $H_{00}^{-\half}$ or $H^{-\half}$ depending on whether the
interface intersected the Dirichlet boundary or not. We remark that 
in the continuous setting boundary values have measure zero and this may then be perceived as the $L^2$ space
being the correct one in our discrete setting. Of course, the counterargument in 
the continuous setting is that then 
the trace cannot be defined. However, in the discrete setting, this can be done.

Without including the simulation results we
comment here that the condition numbers of the Dirichlet problem are practically
identical to those presented in Tables \ref{tab:L2_uni} and \ref{tab:L2}. In addition, with the two preconditioners $S=I$ and $S=(-\Delta + I)^{1/2}$ on the
unstructured meshes a growth of condition numbers with $h$ is observed similar
to Table \ref{tab:bab_uni}.
\end{example}

We remark that the stability of the preconditioner $h^{-1}I$ in
Example \ref{ex:bab_uni} provides numerical evidence for well-posedness of
\eqref{eq:babuska_N}, i.e. the Darcy subproblem in the coupled Darcy-Stokes
system \eqref{eq:darcy_stokes_weak}.
%

\section{Robust Preconditioners for the Darcy--Stokes system}
\label{sec:darcy-stokes}

In Example \ref{ex:prelim}, we showed that the efficiency of the preconditioner \eqref{eq:B_darcy_stokes_naive} for the primal Darcy--Stokes problem  \eqref{coeff:darcy:stokes} varied substantially with 
the material parameters even
though the Stokes block and the Darcy block were preconditioned with  appropriate preconditioners, and argued that the reason was a poor preconditioner at the interface. 

In this section we demonstrate that robustness with respect to mesh resolution and variations in
material parameters can be obtained by posing the Lagrange multiplier in properly weighted fractional spaces, namely the intersection space $X_{\Gamma} = \DSmulte$. No modifications of the velocity or pressure space norms will be required.
Our analysis is closely related to \cite{paper1}, and based on \Cref{normalderivativeeps} along with 
an assumption of stability for the Stokes problem. We remark that although \Cref{normalderivativeeps} 
is motivated by the discrete problem, our analysis is carried out in a continuous setting. 

Let $\partial {\Omega_i} = \partial\Omega_{i,D} \cup \partial\Omega_{i,N} \cup \Gamma$ for $i=f, p$ such that $\partial\Omega_{f, D}\cap\Gamma=\emptyset$. We shall prove well-posedness of the coupled Darcy-Stokes problem \eqref{eq:darcy_stokes_weak} 
with spaces
\begin{equation}\label{eq:DS_spaces}
	V_f = \smuH,\, 
	Q_f = \ismuL,\,  
	Q_p = \sKH,\,  
	X_\Gamma = \DSmulte.  
\end{equation}
We remark that in case $\Gamma$ intersects only the Dirichlet boundary $\partial\Omega_{f, D}$ the 
space $H^{-1/2}$ needs to be modified to reflect $H^{1/2}_{00}$ as the appropriate trace space 
of $V_f$. We refer to \cite{paper1} for a thorough discussion of the subject.

As a prerequisite for the coupled problem to be well-posed, we require that each subproblem is well-posed. 
For the Stokes subproblem the property has been demonstrated by numerical experiments in \cite{paper1}. 
Here we state the result without proof.

\begin{assumption}\label{assumption:stokesbound}
Let $\Omega_f$ be such that $\partial {\Omega_f} = \partial\Omega_{f,D} \cup \partial\Omega_{f,N} \cup \Gamma$, $\semi{\partial\Omega_{f, D}}>0$ 
and $\partial\Omega_{f, D}\cap\Gamma=\emptyset$. We define $V_S = \smuH \times \ismuL \times \DSmultf$ 
and the forms
    \begin{align*} 
       a_S((\uf, p_f, \lambda), (\vf, q_f, w)) =& \mu (\nabla \uf, \nabla \vf) + D (\uf \cdot \mathbf{\tau}, \vf \cdot \mathbf{\tau})_{\Gamma} + (p_f, \nabla \cdot \vf)  + (\nabla \cdot \uf, q_f) + (T_n\uf, w)_{\Gamma} + (\lambda, T_n\vf)_{\Gamma},\\
      L_S((\vf, q_f, w)) =& (\mathbf{f}, \vf) + (g, q_f) + (h_D, w)_{\Gamma},
    \end{align*}
    where
    $\mathbf{f} \in {\frac 1 {\sqrt{\mu}}H^{-1}(\Omega_f)}, g \in {\sqrt{\mu}L^2(\Omega_f)}, h_D \in {\sqrt{\mu} H^{\half}(\Gamma)}$ are arbitrary. 
    Then we assume that the Stokes problem: Find $(\uf, q_f, \lambda)\in V_s$ such that
    \begin{align*}
      a_S((\uf, p_f, \lambda), (\vf, q_f, w)) = L_S((\vf, q_f, w))\quad\forall(\vf, q_f, w) \in V_S
    \end{align*}
    satisfies the Brezzi conditions and hence has a unique solution $(\uf, p_f, \lambda) \in V_S$ 
    and the following bound holds
    $$
      \| (\uf, p_f, \lambda)\|_{V_S} \leq C\left ( \|\mathbf{f}\|^2_{\frac 1 {\sqrt{\mu}}H^{-1}(\Omega_f)} + \|g\|^2_{\sqrt{\mu}L^2(\Omega_f)} + \|h_D\|^2_{\sqrt{\mu} H^{\half}(\Gamma)}  \right )^{\frac 1 2}.
    $$
    Here the constant $C$ depends only on $\Omega_f, \: \partial \Omega_{f, D}$ and $\Gamma$.
\end{assumption}

Corresponding well-posedness of the Darcy problem with $\Tgn$-constraint was demonstrated 
numerically for $K=1$ in Example \ref{ex:bab_uni}. Here, we analyze the general case.

\begin{lemma} \label{lmm:primaldarcybound}
  Suppose $\Omega_p, \Gamma$ are such that \Cref{normalderivativeeps} holds and $\semi{\partial\Omega_{p, D}}> 0$. Then for any $f \in \frac 1 {\sqrt K} H^{-1}(\Omega_p),$ $ h \in \frac 1 {\sqrt{K}} L^2(\Gamma)$, the problem of finding $(p_p, \lambda)\in \sKH \times \DSmultpe$ so that
\begin{equation}\label{eq:darcy_sys}
  \begin{aligned}
    K(\nabla p_p, \nabla q_p)_{\Omega_p} + K(\lambda, \Tgne q_p)_{\Gamma} &= (f, q_p)&\forall q_p\in \sKH,\\
    K(\Tgne p_p, w)_{\Gamma} &=(h, w)_{\Gamma}&\forall w\in \DSmultpe
  \end{aligned}
\end{equation}
  has a unique solution satisfying 
$$
  \|p_p\|_{\sKH} \leq C \left ( \|h\|^2_{\frac 1 {\sqrt{K}} L^2(\Gamma)} + \|f\|^2_{\frac 1 {\sqrt K} H^{-1}(\Omega_p)} \right )^{\frac 1 2},
$$
  where $C$ is a constant depending only on $\Omega_p$.
\end{lemma}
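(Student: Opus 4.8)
The plan is to establish well-posedness of the saddle-point system \eqref{eq:darcy_sys} by verifying the Brezzi conditions for the bilinear forms
\[
a_D(p_p, q_p) = K(\nabla p_p, \nabla q_p)_{\Omega_p},\qquad b_D(q_p, w) = K(\Tgne q_p, w)_{\Gamma},
\]
on the spaces $V = \sKH$ and $Q = \DSmultpe$. Since the system is symmetric and the $(2,2)$-block vanishes, the standard Brezzi theory applies once we check (i) coercivity of $a_D$ on the kernel of $b_D$, (ii) continuity of both forms, and (iii) the inf-sup condition for $b_D$. The bound on $\|p_p\|_{\sKH}$ then follows from the abstract stability estimate. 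Throughout I would track the powers of $K$ carefully so that the final constant $C$ is independent of the permeability.

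First I would verify continuity. For $a_D$ this is immediate from Cauchy--Schwarz, with $|a_D(p_p,q_p)| \le \|p_p\|_{\sKH}\|q_p\|_{\sKH}$ since the $K$-weight is built into the norm. For $b_D$, I would invoke \Cref{normalderivativeeps}: $\Tgne$ is bounded as a map $H^1(\Omega_p)\to L^2(\Gamma)$, so $|b_D(q_p, w)| = K|(\Tgne q_p, w)_\Gamma| \le K\,\|\Tgne q_p\|_{L^2(\Gamma)}\|w\|_{L^2(\Gamma)} \le C\sqrt{K}\|q_p\|_{\sKH}\cdot\sqrt{K}\|w\|_{L^2(\Gamma)}$, which matches the weighting $w\in\DSmultpe = \sqrt{K}L^2(\Gamma)$ exactly. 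The inf-sup condition is where the right inverse from \Cref{normalderivativeeps} does the essential work: given $w\in\DSmultpe$, I would take $q_p$ to be a suitably scaled preimage under the bounded right inverse of $\Tgne$, so that $\Tgne q_p = w$ and $\|q_p\|_{H^1(\Omega_p)}\le C\|w\|_{L^2(\Gamma)}$; then $b_D(q_p,w) = K\|w\|^2_{L^2(\Gamma)}$ and the norms align to give an inf-sup constant independent of $K$.

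For coercivity on the kernel, the kernel $Z = \{q_p\in\sKH : \Tgne q_p = 0\}$ consists of Darcy pressures with vanishing (approximate) normal derivative on $\Gamma$; since $\semi{\partial\Omega_{p,D}}>0$, a Poincar\'e--Friedrichs inequality on $H^1_{0,D}(\Omega_p)$ gives $a_D(q_p,q_p) = K\|\nabla q_p\|^2_{L^2(\Omega_p)} \ge c\,K\|q_p\|^2_{H^1(\Omega_p)} = c\,\|q_p\|^2_{\sKH}$, with $c$ depending only on $\Omega_p$. This is in fact coercivity on all of $V$, not merely on $Z$, which simplifies matters. Assembling these estimates into the Brezzi framework yields existence, uniqueness, and the stated bound.

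\emph{The hard part} I expect to be making the inf-sup argument fully rigorous while keeping the $K$-dependence transparent: the right inverse supplied by \Cref{normalderivativeeps} is an analytical construction depending on $\epsilon$, and one must confirm that its operator norm (and hence the inf-sup constant) is independent of both $\epsilon$ and $K$, relying only on the geometry of $\Omega_p$ and $\Gamma$. A secondary subtlety is the treatment of the data: to obtain the clean estimate involving $\|f\|_{\frac{1}{\sqrt K}H^{-1}(\Omega_p)}$ and $\|h\|_{\frac{1}{\sqrt K}L^2(\Gamma)}$, the duality pairings on the right-hand side must be bounded against the correctly weighted test-function norms, which again comes down to the consistency of the $K$-scalings in the velocity/pressure and multiplier spaces.
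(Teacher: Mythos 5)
Your proposal is correct and matches the paper's proof essentially step for step: both verify the Brezzi conditions with coercivity of $a_D$ on all of $V=\sKH$ via the Poincar\'e inequality (using $\semi{\partial\Omega_{p,D}}>0$), continuity of $b_D$ from the boundedness of $\Tgne$ in \Cref{normalderivativeeps}, and the inf-sup condition by taking the test function to be the image of $w$ under the bounded right inverse of $\Tgne$, with the same $K$-weight bookkeeping. The $\epsilon$- and $K$-independence of the right-inverse norm that you flag as the hard part is simply postulated in \Cref{normalderivativeeps} (its norm enters the constant $C$, which the assumption ties only to the geometry), so the paper, like your argument, need not address it further.
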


\begin{proof}
  Let $V=\sKH$, $Q=\DSmultpe$. We consider the left-hand side of \eqref{eq:darcy} 
  as an operator
  \begin{equation}\label{eq:brezzi}
  \begin{pmatrix}
   A & B'\\
   B   
  \end{pmatrix}:
  V\times Q\rightarrow V'\times Q',
  \end{equation}
  where
  $(A p_p, q_p) = K(\nabla p_p, \nabla q_p)_{\Omega_p}$ and $(B p_p, w) =  K(\Tgne p_p, w)_{\Gamma}$.
  
  The statement of the theorem follows from the Brezzi theory \cite{brezzi1974existence} once the Brezzi conditions 
  are verified. That is, we must show that $A$, $B$ are bounded, $A$ is coercive on $\ker B$ 
  and that the inf-sup condition $\inf_{q\in Q}\sup_{v\in V} (Bv, q) \geq \beta\norm{v}\norm{q}$ holds for some constant $\beta>0$.
  
  Here the boundedness of $A$ and the coercivity on $V$ are evident. For the latter we 
  recall that $\semi{\partial\Omega_{p, D}}> 0$ is assumed 
  and invoke the Poincare inequality. \Cref{normalderivativeeps} 
  is needed to show the properties of $B$. Because 
  $$K(\lambda, \Tgne q_p)_{\Gamma} \leq  K \|\lambda\|_{\DSmultpe} \|\Tgne q_p\|_{\frac 1 {\sqrt{K}} L_2(\Gamma)} \leq \|\Tgne\|  \|\lambda\|_{\DSmultpe} \|q_p\|_{\sqrt{K} H^1(\Omega_p)}, $$ we have boundedness with constant $\|\Tgne\|$. For the inf-sup condition, we recall the bounded right inverse $E$ of $\Tgne$. Letting $p_p^* = E(\lambda)$, we have $K(\lambda, \Tgne p_p^*) = \|\lambda\|^2_{\DSmultpe}$ and $\|p_p^*\|_{\sKH} \leq  \|E\| \| \lambda \|_{\sqrt{K}L_2(\Gamma)}$ so that
  \begin{align*}
    \sup\limits_{p_p \in \sKH} \frac {K(\lambda, \Tgne p_p)} {\|p_p||_{\sKH}} &\geq \frac {K(\lambda, \Tgne p_p^*)} {\|p_p^* ||_{\sKH}} = \frac { \|\lambda\|^2_{\DSmultpe}} {\|p_p^* ||_{\sKH}} \geq \frac { \|\lambda\|^2_{\DSmultpe}} {\|E\| \|\lambda ||_{\DSmultpe}}  \geq \frac 1 {\|E\|} \|\lambda\|_{\DSmultpe}.
  \end{align*}
This proves all the Brezzi conditions.
\end{proof}

Having discussed well-posedness of the Stokes and Darcy subproblems our main result concerning the coupled Darcy-Stokes problem \eqref{eq:darcy_stokes_weak} is given in Theorem 
\ref{lmm:DSwellposedness}. We remark that given two well-posed subproblems the coupled 
system could be analyzed with the framework of \cite{paper1}. Here we provide 
a standalone proof.

\begin{theorem} \label{lmm:DSwellposedness}
  Let $\Omega_f, \Omega_p$ be as defined in 
  \Cref{normalderivativeeps} and Assumption \ref{assumption:stokesbound}. Further let
  \[
  	V_f = \smuH,\, 
	Q_f = \ismuL,\,  
	Q_p = \sKH,\,  
	X_\Gamma = \DSmulte.
\]
  Then  
the operator $\AA$ in \eqref{coeff:darcy:stokes} is an isomorphism mapping 
$W$ to its dual space $W'$ such that 
$\|\AA\|_{\mathcal{L}(W,W')} \le C$ and 
$\|\AA^{-1}\|_{\mathcal{L}(W',W)} \le \frac 1C$, 
where $C$ is independent of $\mu$, $K$, and $D$.
\end{theorem}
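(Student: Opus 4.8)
The plan is to verify the conditions of the operator preconditioning framework of \cite{mardal2011preconditioning}, which for the symmetric saddle-point operator $\AA$ in \eqref{coeff:darcy:stokes} reduces to checking the Brezzi conditions on the space $W = V_f \times Q_p \times Q_f \times X_\Gamma$ with the weighted norms in \eqref{eq:DS_spaces}. Writing the system in the block form with $a$ being the coercive block on $(\uf, p_p)$ and $b$ the coupling to $(p_f, \lambda)$, I would establish: (i) boundedness of $a$ and $b$ with constants independent of $\mu, K, D$; (ii) coercivity of $a$ on the kernel of $b$; and (iii) the inf-sup condition for $b$. The crucial point throughout is that every norm carries the correct power of the material parameters so that all constants are parameter-independent, which is precisely why the weighted spaces in \eqref{eq:DS_spaces} and the intersection multiplier space $X_\Gamma = \DSmulte$ are chosen as they are.

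First I would dispense with boundedness. The form $a$ decouples into the Stokes viscous/BJS part on $\uf$ and the Darcy part on $p_p$; boundedness of each piece in the weighted $V_f$ and $Q_p$ norms is immediate, the BJS term being controlled by the $\sqrt{D}L^2_\tau(\Gamma)$ component built into $V_f$. For $b$, the term $(\nabla\cdot\uf, p_f)$ is bounded since $\nabla\cdot\uf \in \ismuL$ matches $Q_f$. The two interface terms $(T_n\uf, w)_\Gamma$ and $K(\Tgne p_p, w)_\Gamma$ are where the intersection structure of $X_\Gamma$ pays off: I would bound $(T_n\uf, w)_\Gamma$ by pairing $T_n\uf \in \sqrt{\mu}H^{1/2}(\Gamma)$ against the $\DSmultf$ factor of $w$, and bound $K(\Tgne p_p, w)_\Gamma$ using \Cref{normalderivativeeps} (the bound $\|\Tgne p_p\|_{L^2(\Gamma)} \le C\|p_p\|_{H^1(\Omega_p)}$) paired against the $\DSmultpe$ factor of $w$, exactly as in the boundedness step of \Cref{lmm:primaldarcybound}.

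Coercivity of $a$ on $\ker b$ is the least delicate: on the kernel, $\nabla\cdot\uf = 0$ so the velocity block is coercive in $\sqrt{\mu}H^1_{0,D}(\Omega_f)$ by Korn/Poincar\'e (using $\semi{\partial\Omega_{f,D}}>0$), and the Darcy block is coercive in $\sKH$ by Poincar\'e (using $\semi{\partial\Omega_{p,D}}>0$), giving full coercivity on the relevant subspace. The main obstacle, as always for such coupled saddle-point systems, is the inf-sup condition for $b$: given $(p_f, \lambda) \in Q_f \times X_\Gamma$ I must construct a test pair $(\vf, q_p)$ realizing the supremum with a parameter-independent constant. I would split $\lambda$ according to the two norms defining $X_\Gamma$ and handle the pieces with the two subproblem results. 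The component of $\lambda$ in $\DSmultf$ together with the pressure $p_f$ is controlled by invoking Assumption \ref{assumption:stokesbound}: the stable Stokes solve provides a velocity $\vf$ whose divergence matches $p_f$ and whose normal trace pairs correctly with that part of $\lambda$. The component of $\lambda$ in $\DSmultpe$ is handled by the bounded right inverse $E$ of $\Tgne$ from \Cref{normalderivativeeps}, choosing $q_p = E(\lambda)$ exactly as in the inf-sup step of \Cref{lmm:primaldarcybound}. The delicate part is that these two constructions interact through $\lambda$, so I would need to argue that choosing $\vf$ and $q_p$ can be done so that the cross-contributions either cancel or are absorbed; in practice this means first using the Stokes stability to control the $Q_f$ and $\DSmultf$ directions, then correcting in the Darcy direction via $E$ without spoiling the former, tracking the parameter weights at each step to confirm $\mu$-, $K$-, and $D$-independence. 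Once the three Brezzi constants are shown to be uniform in the parameters, the claimed bounds $\|\AA\|_{\mathcal{L}(W,W')} \le C$ and $\|\AA^{-1}\|_{\mathcal{L}(W',W)} \le \tfrac{1}{C}$ follow directly from Brezzi theory.
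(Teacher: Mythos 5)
Your proposal is correct and follows essentially the same route as the paper's proof: Brezzi conditions on $V = V_f\times Q_p$, $Q = Q_f\times X_\Gamma$, boundedness of the coupling form by pairing $T_n\uf$ against the $\DSmultf$ factor and $K\Tgne p_p$ against the $\DSmultpe$ factor of the multiplier norm, coercivity of $a$ from Poincar\'e (which in fact holds on all of $V$, not only on $\ker b$, so neither the divergence-free property nor Korn's inequality is needed), and an inf-sup construction assembled from the two subproblem stability results, \Cref{assumption:stokesbound} for Stokes and \Cref{lmm:primaldarcybound} (equivalently, the right inverse $E$ from \Cref{normalderivativeeps}) for Darcy.

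The one step you leave open --- the ``delicate part'' where the two constructions supposedly interact through $\lambda$ --- is a non-issue, and resolving it is precisely what the paper's displayed computation does. The form $b$ has no cross terms between $\vf$ and $q_p$: the velocity enters only through $(\nabla\cdot\vf, q_f) + (T_n\vf, \lambda)_\Gamma$ and the Darcy pressure only through $-K(\Tgne q_p, \lambda)_\Gamma$. Hence with $\vf = \uf^*$ the Stokes solution driven by the Riesz representations $\Rszi{Q_f}q_f$ and $\Rszi{X_f}\lambda$, and with $q_p = -E(\lambda)$ (note the sign: your choice $q_p = E(\lambda)$ would produce $-\|\lambda\|^2_{\DSmultpe}$ in the numerator because of the minus sign in $b$; the paper's own display has the same slip, fixed by flipping the sign of the Darcy extension), the numerator equals $\|q_f\|^2_{Q_f} + \|\lambda\|^2_{X_f} + \|\lambda\|^2_{X_p} = \|q_f\|^2_{Q_f} + \|\lambda\|^2_{X_\Gamma}$, with nothing to cancel or absorb, while the denominator is controlled by the two stability bounds. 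Relatedly, since $X_\Gamma$ is an intersection space there is no ``splitting'' of $\lambda$ into components: the same $\lambda$ is paired whole against both test constructions, and it is the two numerator contributions, not $\lambda$ itself, that decompose additively.
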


\begin{proof}[Proof of Theorem \ref{lmm:DSwellposedness}]
We aim to apply Brezzi theory \cite{brezzi1974existence} to the Darcy-Stokes operator \eqref{coeff:darcy:stokes} 
in the abstract form \eqref{eq:brezzi}.
To this end let $V=V_f\times Q_p$ and $Q=Q_f\times X_{\Gamma}$ where for brevity 
$X_f = \DSmultf$, $X_p = \DSmultpe$ and we let the operators $A$, $B$ be defined in 
terms of bilinear forms from \eqref{eq:dsnaive} as
\[
\begin{aligned}
 (A(\uf, p_p), (\vf, q_p)) &=  \mu  (\nabla \uf, \nabla \vf)_{\Omega_f} + D (\uf \cdot \mathbf{\tau}, \vf \cdot \mathbf{\tau})_\Gamma  + K(\nabla p_p, \nabla q_p)_{\Omega_p}, \\  
(B(\uf, p_p), (q_f, w)) &=  (\nabla \cdot \uf, q_f)_{\Gamma} + (T_{n}\uf, w)_{\Gamma} - K(\Tgne p_p, w)_{\Gamma}.
\end{aligned}
\]

We proceed to verify the Brezzi conditions.
    Note that by assumption $\semi{\partial\Omega_{i, D}}>0$, $i=p, f$ so that by Poincare 
    inequality on both subdomains $A$ is coercive. For boundedness of $A$ observe that 
    $\mu  (\nabla \uf, \nabla \vf) + D (\uf \cdot \mathbf{\tau}, \vf \cdot \mathbf{\tau})_\Gamma < \|\uf\|_{V_f} \|\vf\|_{V_f}$ by Cauchy Schwarz inequality. Moreover, following Lemma \ref{lmm:primaldarcybound}, we have
    $K(\nabla p_p, \nabla q_p) \leq \|p_p\|_{Q_p}\|q_p\|_{Q_p} $. Combining the two and applying the Cauchy-Schwarz inequality, 
    \[
    (A(\uf, p_p), (\vf, q_p)) \leq \|\uf\|_{V_f} \|\vf\|_{V_f}+  \|p_p\|_{Q_p}\|q_p\|_{Q_p} \leq \|(\uf, p_p)\|_V \|(\vf, q_p)\|_V.
    \]
  
    To show boundedness of $B$ we recall that 
    $\|\nabla \cdot \uf\|_{Q^{\dual}_f} = \|\nabla \cdot \uf\|_{\sqrt{\mu}L^2(\Omega_f)} \leq C \|\uf\|_{V_f}  $, where $C$ depends on dimensionality of $\Omega_f$. Further, by the trace inequality $\|T_{n}\uf\|_{X^{\dual}_f} = \|T_{n}\uf\|_{\sqrt{\mu} H^{\half}(\Gamma)} \leq \|T_{n}\| \|\uf\|_{V_f}$, and by Assumption \ref{normalderivativeeps} $\|K\Tgne p_p\|_{X_p^{\dual}} = \|K\Tgne p_p\|_{\frac 1 {\sqrt K}L^2(\Gamma)} = \|\Tgne p_p\|_{{\sqrt K}L^2(\Gamma)} \leq \|\Tgne\| \|p_p\|_{Q_p} $. Hence, per definition of dual norms,
  \[
  (\nabla \cdot \uf, q_f)_{\Gamma} + (T_{n}\uf, w)_{\Gamma} \leq \|\nabla \cdot \uf\|_{Q^{\dual}_f} \|q_f\|_{Q_f} + \|T_{n}\uf\|_{X^{\dual}_f} \|w\|_{X_f} \leq \max(1, C)\|\uf\|_{V_f} \left ( \|q_f\|_{Q_f} + \|T_{n}\|\|w\|_{X_f} \right )\]
  and
  \[
  K(\Tgne p_p, w)_{\Gamma} \leq \|K\Tgne p_p\|_{X_p^{\dual}} \|w\|_{X_p} \leq \|\Tgne\| \|p_p\|_{Q_p} \|w\|_{X_p}.
  \]
  Combining the two we show boundedness of $B$
  \[
  (B(\uf, p_p), (q_f, w)) \leq  2\max (1, C, \|T_n\|, \|\Tgne\|\|) (\uf, p_p)\|_v \|(q_f, w)\|_Q.
  \]


  Finally, we turn to the inf-sup condition. Let $\Rszi{Q_f}, \Rszi{X_f}, \Rszi{X_p}$ be the inverse Riesz maps of their respective spaces, so that $R_Vu = (u, \cdot)_V \in V^{\dual}$. Let $(q_f, w) \in Q_f \times X$ be arbitrary. We first define two extensions by using the two subproblems. Recalling the notation of \Cref{assumption:stokesbound}, let $(\uf^*, p_f^*, \lambda*)$ be the solution of
    \begin{align*}
      a_S((\uf^*, p_f^*, \lambda^*), (\vf', q'_f, w')) =& \hspace{1mm} (\Rszi{Q_f}q_f, q'_f) + (\Rszi{X_f}w, w')_{\Gamma} \text{ for all } (\vf', q'_f, w') \in V_S
    \end{align*}
    Per assumption, there is a constant $C_f$ so that we have the bound
    \begin{equation}
      \label{eq:stokesextbound}
      \|\uf^*\|_{V_f} \leq C_f \left ( \|\Rszi{Q_f}q_f\|^2_{\sqrt{\mu}L^2(\Omega_f)} + \|\Rszi{X_f} w\|^2_{\sqrt{\mu} H^{\half}(\Gamma)}  \right )^{\frac 1 2} = C_f \left ( \|q_f\|^2_{Q_f} + \|w\|^2_{X_f}  \right )^{\half}
    \end{equation}
    where the right equality follows from the fact that ${\sqrt{\mu}L^2(\Omega_f)} = Q^{\dual}_f$, ${\sqrt{\mu} H^{\half}(\Gamma)} = X_f^{\dual}$ and that the Riesz map is an isometry. Similarly, let $p_p^*, \lambda_2^*$ be the solution of
    $$K(\nabla p^*_p, \nabla q'_p)_{\Omega_p} + K(\lambda_2^*, \Tgne q'_p)_{\Gamma} + K(\Tgne p^*_p, w')_{\Gamma} = (\Rszi{X_p}w, w')_{\Gamma}\quad(q'_p, w') \in \sKH \times \DSmultpe.$$
    By \Cref{{lmm:primaldarcybound}}, we then have the bound
    \begin{equation}
\|p^*_p\|_{Q_p} \leq C_p\|\Rszi{X_p}w\|_{\frac 1 {\sqrt{K}} L^2(\Gamma)} = C_p\|w\|_{X_p}  \label{eq:darcyextbound}
\end{equation}

for a constant $C_p$. Observe now that by our definitions of $\uf^*, p^*_p$,  
    \begin{align*}
      &(\nabla \cdot \uf^*, q_f)_{\Gamma} + (T_{n}\uf^*, w)_{\Gamma} - K(\Tgne p^*_p, w)_{\Gamma} = (\Rszi{Q_f}q_f, q_f)_{\Gamma} + (\Rszi{X_f}w, w)_{\Gamma} - (\Rszi{X_p}, w)_{\Gamma} \\
      =& \|q_f\|^2_{Q_f} + \|w\|^2_{X_f} + \|w\|^2_{X_p} = \|q_f\|^2 + \|w\|^2_X.
    \end{align*}
    Using \eqref{eq:stokesextbound}, \eqref{eq:darcyextbound}
    $$\|(\uf^*, p^*_p)\|_{V} = \left (\|\uf^*\|^2_{V_f} + \|p^*_p)\|^2_{Q_p}\right )^{\frac 1 2} \leq C \left (\|q_f\|^2_{Q_f} + \|w\|^2_{X_f} + \|w\|^2_{X_p}\right )^{\frac 1 2} = C \left (\|q_f\|^2_{Q_f} + \|w\|^2_{X}\right )^{\frac 1 2}, $$
    where $C = \max(C_f, C_p)$. Putting this together, we can prove the inf-sup condition:
    \begin{align*}
      \sup\limits_{(\uf, p_p) \in V} & \frac {(\nabla \cdot \uf, q_f)_{\Gamma} + (T_{n}\uf, w)_{\Gamma} - K(\Tgne p_p, w)_{\Gamma}} {\|(\uf, p_p)\|_{V}}  \geq \frac {(\nabla \cdot \uf^*, q_f)_{\Gamma} + (T_{n}\uf^*, w)_{\Gamma} - K(\Tgne p^*_p, w)_{\Gamma}} {\|(\uf^*, p^*_p)\|_{V}} \\
           \geq&  \frac 1 C \frac {\|q_f\|^2 + \|w\|^2_X} {\left (\|q_f\|^2_{Q_f} + \|w\|^2_{X}\right )^{\frac 1 2}} = \frac 1 C \|(q_f, w)\|_{Q}.
    \end{align*}
    Hence, the inf-sup condition holds with $\beta = \frac 1 C$. By Brezzi theory, \Cref{lmm:DSwellposedness} follows, and the problem is well-posed. As in the argument of \cite{paper1}, we note that due to our use of parameter weighted spaces, all constants are in fact independent of the problem parameters, and the operator preconditioner is therefore robust to parameter variations. 

\end{proof}

Using operator preconditioning and Theorem \ref{lmm:DSwellposedness} a suitable
preconditioner for the primal Darcy-Stokes problem \eqref{eq:darcy_stokes_weak} 
is a Riesz map with respect to the inner product of $W$ in \eqref{eq:DS_spaces}, 
that is, the operator
\begin{equation}\label{eq:B_darcy_stokes}
\BB =    \left( \begin{array}{cccc} 
     -\mu \Delta + D \tG^{\prime}\tG &     \\ 
      & K \Delta   &   \\ \hline
      &  &  \frac{1}{\mu}I  &  \\
      & &    &  -\mu\left ( I + \Delta \right )_{\Gamma}^{-1/2} + \frac K h I  \\ 
     \end{array} \right)^{-1}. 
 \end{equation}

We remark that all of the components of the preconditioner can be realized in
an efficient, order optimal manner with multilevel schemes. In particular,
the only non-standard component here is the multilevel scheme for the
fractional operator which, however, has been established in \cite{baerland2018multigrid}.

\begin{example}[Robust Darcy-Stokes preconditioning]\label{ex:darcy_stokes}
  We consider the setup from Example \ref{ex:prelim} while using 
  the operator \eqref{eq:B_darcy_stokes} as preconditioner. As before, the leading blocks
  of the preconditioner are realized using single algebraic multigrid $V$-cycle.
  The multiplier block is then assembled using the eigenvalue decomposition and its
  inverse is computed by a direct solver.

  The obtained iteration and condition numbers are plotted in Figure \ref{fig:darcystokes}. 
  It can be seen that both quantities are bounded in mesh size $N$ as well 
  as the physical parameters $\mu$, $\kappa$ and $\alpha_{\text{BJS}}$.

  \begin{center}
    \begin{figure}[H]
      \includegraphics[width=0.49\linewidth]{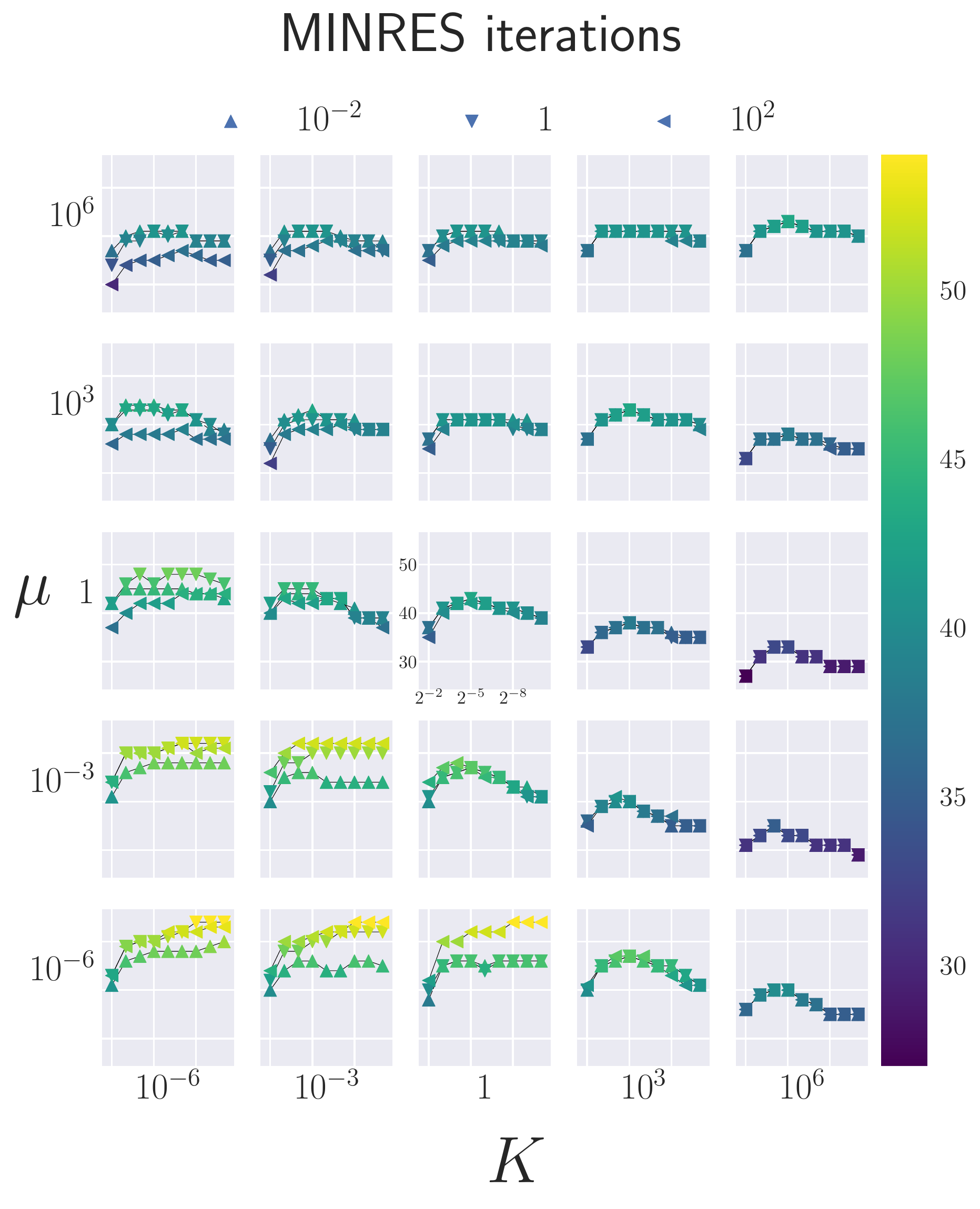}
      \includegraphics[width=0.49\linewidth]{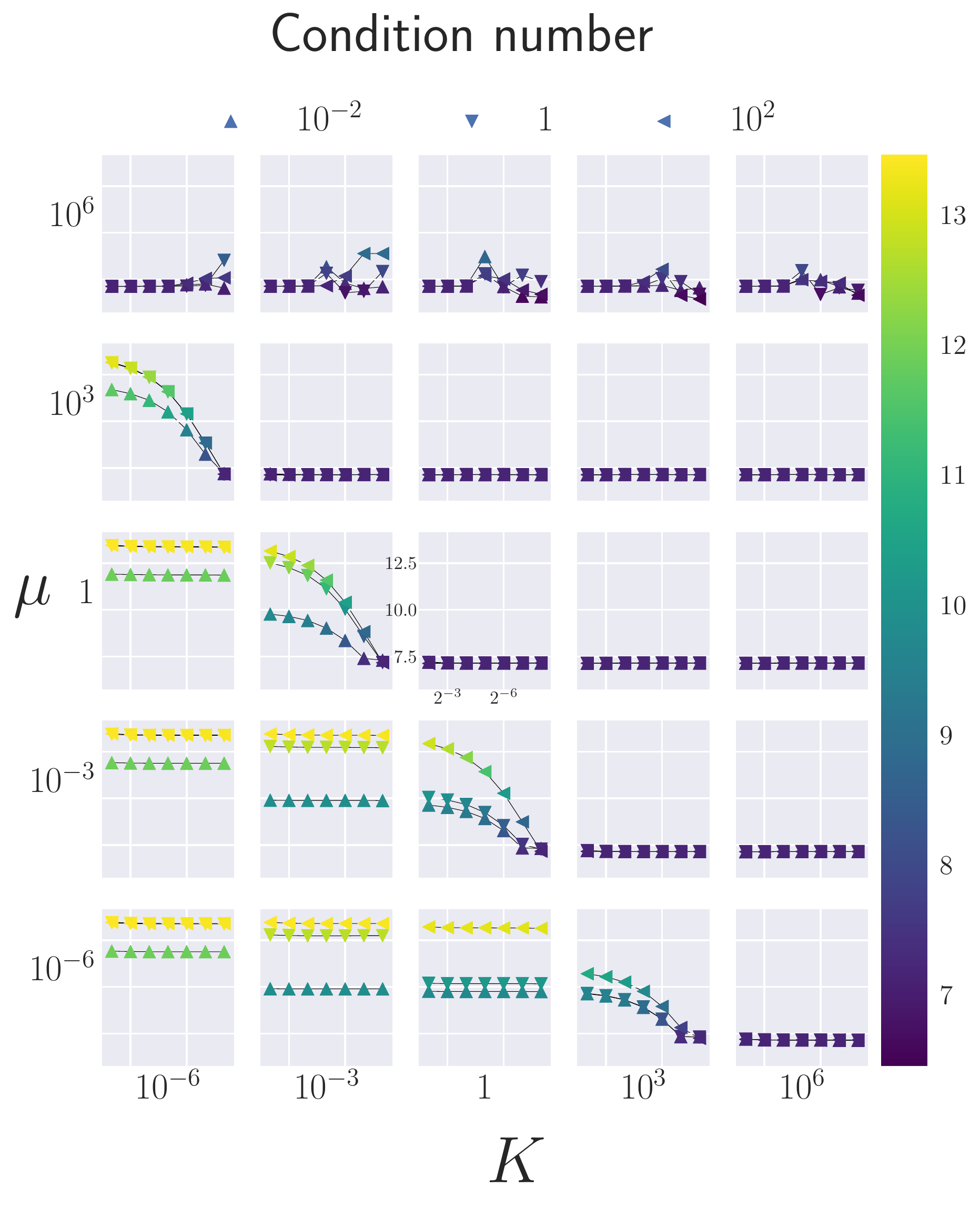}
      
    \caption{
             Mesh refinement vs. iteration counts (left) and condition numbers (right) for 
             Example \ref{ex:darcy_stokes} using the preconditioner \eqref{eq:B_darcy_stokes}.
         All subplots share $x$- and $y$-axes. 
         For fixed $\mu$, $K$ the 
         x-axis range in the iterations subplot extends from $h=2^{-2}$ to $h=2^{-10}$. In conditioning plots the range is 
         from $h=2^{-2}$ to $h=2^{-8}$.  
     The value of $\alpha_{\text{BJS}}$ is indicated by the line marker. 
     Triangles on top of each other look like squares.  
 \label{fig:darcystokes}
    }
    \vspace{5pt}    
\end{figure}
\end{center}


\end{example}




%
%
%

\begin{remark}
Below we consider the validity of \Cref{normalderivativeeps} in a continuous and discrete setting. 
Clearly, in a continuous setting it is easy to find a function that violates the assumption. Consider the case where $\epsilon \ll h$ while 
$\Omega$ and $\Gamma$ are both unit sized. Further, let 
$u \in H^1(\Omega_p)$ be a function which is zero in $\Omega \backslash \Gamma_\epsilon$ and has  a gradient of 1 in $\Gamma_{\epsilon}$. 
Recalling our definition of the operator $\Tgne : H^1(\Omega_p) \rightarrow L^2(\Gamma_\epsilon)$ by $$\int_{\Gamma} \Tgne u \cdot w \, ds = \frac 1 {\epsilon} \int_{\Gamma_{\epsilon}} \nabla u \cdot \nn_{\Gamma_{\epsilon}} E_{\epsilon}w \, dx,  $$ we see that $\Tgne u \in L^2(\Gamma_\epsilon)$ is the unit constant function whereas $\|u\|_1 \approx \sqrt{\epsilon}$. Hence $\|\Tgne\| \geq \frac {\|\Tgne u\|_{L^2(\Gamma_\epsilon)}} {\|u\|_{H^1(\Omega_p)}} \approx \frac 1 {\sqrt \epsilon}$, which is very large for small $\epsilon$. Clearly, this function violates \Cref{normalderivativeeps}.

The above construction of a function that violates the assumption is however clearly not relevant in our discrete setting as these functions are 
below the resolution of our finite element mesh. Indeed, in our numerical experiments, we use discrete subspaces of $H^1(\Omega_p)$, so that any function whose gradient is nonzero on $\Gamma$ also has nonzero gradient at distance $h$ from $\Gamma$. This means that if $\epsilon$ is chosen smaller than $h$, functions like $u$ above which are zero immediately outside of $\Gamma_{\epsilon}$ are not admissible.  

For a relevant finite element function $u_h$, constructed as above, i.e.,  such that $u_h$ is 
zero everywhere except having a gradient of 1 on the finite elements with facets on $\Gamma$, assuming that $\epsilon \ll h$,   
we have $\frac {\|\Tgne u_h\|_{L^2(\Gamma_\epsilon)}} {\|u_h\|_{H^1(\Omega_p)}} \approx \frac {1} {\sqrt h}$. 
Indeed this estimate corresponds to the scaling shown in the Examples \ref{ex:L2_trace} and \ref{ex:bab_uni}.  
\end{remark}

\bibliography{multiphysics}

\end{document}